\theoremstyle{plain}
\newtheorem{theorem}{Theorem}[section]
\newtheorem{proposition}[theorem]{Proposition}
\newtheorem{lemma}[theorem]{Lemma}
\theoremstyle{definition}
\newtheorem{remark}[theorem]{Remark}
\numberwithin{equation}{section}
\newcommand*{\dif}{\mathop{}\!\mathrm{d}}
\def \R {\mathbb{R}}
\def \JJ {\mathcal{J}_\varepsilon}
\def \mm {\mathfrak{m}_\varepsilon}
\def \Pe {\Phi_\varepsilon}
\def \pe {\psi_\varepsilon}
\def \mbe {\mu_{\varepsilon,\beta}}
\def \mbew {\widetilde{\mu}_{\varepsilon,\beta}}
\def \nbe {\nu_{\varepsilon,\beta}}
\def \nbew {\widetilde{\nu}_{\varepsilon,\beta}}
\title{On the Fermi-Dirac-type Fisher information}
\begin{document}
\author{Yuzhe Zhu} 
\date{August 20, 2025}

\begin{abstract}
We consider kinetic models for Fermi-Dirac-like particles obeying the exclusion principle. A generalized notion of Fisher information, tailored to kinetic equations of Fermi-Dirac-Fokker-Planck type, is introduced via the associated entropy dissipation identity. We show that, subject to a suitable upper bound on the initial data, this quantity decreases along solutions of the Fermi-Dirac-Fokker-Planck equation, while monotonicity can fail in the absence of such a bound. We also discuss the time evolution of this Fermi-Dirac-type Fisher information for the heat equation and the linear-type Landau-Fermi-Dirac equation with Maxwell molecules. 
\end{abstract}

\maketitle
\hypersetup{bookmarksdepth=2}
\setcounter{tocdepth}{1}
\tableofcontents

\section{Introduction}
We study a generalized Fisher information adapted to kinetic equations modelling Fermi-Dirac-type systems that incorporate the exclusion principle. The primary model of concern is the space homogeneous Fermi-Dirac-Fokker-Planck equation
\begin{align}\label{FDFP}
\partial_t f=\Delta f+\nabla\cdot[v f\;\!(1-\varepsilon f)]. 
\end{align}
Here, the unknown $f= f(t,v)$ denotes the density distribution at time $t\in\R_+$ in the velocity space $v\in\R^d$. The quantum parameter $\varepsilon>0$, typically small, encodes the exclusion effect. 

The nonlinearity in \eqref{FDFP} is captured by the mobility function $\mm:[0,\varepsilon^{-1}]\to[0,\infty)$, 
\begin{align}\label{mobility}
\mm(f):= f\;\!(1-\varepsilon f),
\end{align}
which recovers the standard Ornstein-Uhlenbeck drift when $\varepsilon=0$. It accounts for the exclusion principle by enforcing the microscopic constraint  
\begin{align*}
0\le f\le \varepsilon^{-1}. 
\end{align*}
The basic properties of \eqref{FDFP}, including well-posedness and long-time asymptotics, can be found in \cite{CLR}. 

The development of kinetic models for systems exhibiting Fermi-Dirac-like statistics dates back to \cite{Nordhiem,UU,Lynden}, and has since expanded to include Fokker-Planck variants \cite{KQ,Frank}. The systems of fermions obey Pauli's exclusion principle, which limits the maximum mean occupation numbers of each quantum state. This exclusion effect also extends beyond the quantum systems to model other phenomena involving saturation, such as excluded volume interactions and crowding dynamics. Specific applications of Fermi-Dirac-Fokker-Planck-type equations appear in various contexts including phase segregation in lattice gases \cite{GL} and chemotaxis in bacterial populations \cite{CRRS}; see \cite{Chavanis} for a broader review. 

\subsection{Entropy structure}
The quantum mechanical entropy $\mathcal{E}_\varepsilon$, introduced to take the exclusion principle into account (see, for example, the exposition in \cite{LL}), is defined by
\begin{align}\label{EEntropy}
\begin{aligned}
\mathcal{E}_\varepsilon(f)&:=\int_{\R^d}U_\varepsilon(f) \dif v,\\
U_\varepsilon(f)&:=\frac{1}{\varepsilon}\left[\varepsilon f\log (\varepsilon f)+(1-\varepsilon f) \log (1-\varepsilon f)\right]. 
\end{aligned}
\end{align}
The functional $\mathcal{E}_\varepsilon$ is also referred to as the internal energy associated with the function $U_\varepsilon$. It satisfies $U_\varepsilon''(f)=\mm(f)^{-1}$ so that $\mathcal{E}_\varepsilon$ is convex for $f$ taking values in $(0,\varepsilon^{-1})$. The Fermi-Dirac-Fokker-Planck evolution \eqref{FDFP} decreases the free-energy functional
\begin{align}\label{HHE}
\mathcal{H}_\varepsilon(f):= \mathcal{E}_\varepsilon(f) + \frac{1}{2}\int_{\R^d}|v|^2 f \dif v, 
\end{align}
where the second term on the right-hand side corresponds to the potential energy associated with the confining drift. The first variation of $\mathcal{H}_\varepsilon$ is given by 
\begin{align*}
\frac{\delta\mathcal{H}_\varepsilon}{\delta f}=U_\varepsilon'(f)+\frac{|v|^2}{2}=\log\frac{f}{1-\varepsilon f}+\frac{|v|^2}{2}. 
\end{align*}
In terms of the mobility function \eqref{mobility} and this functional derivative, \eqref{FDFP} can be equivalently written as  
\begin{align}\label{FDFP-E}
\partial_t f=\nabla\cdot\left[\mm(f)\,\nabla\;\!\frac{\delta\mathcal{H}_\varepsilon}{\delta f}\right]. 
\end{align}
This formulation reveals a gradient flow structure characterized by the functional $\mathcal{H}_\varepsilon$ and the nonlinear mobility $\mm$; see \S\!~\ref{flow} below for further discussion. 

\subsection{Entropy dissipation}
In view of \eqref{FDFP-E} and a straightforward integration by parts, one finds that the time derivative of the free energy $\mathcal{H}_\varepsilon$ along a solution $f$ of \eqref{FDFP-E} is given by 
\begin{align}\label{FDFP-I}
\frac{\dif}{\dif t}\,\mathcal{H}_\varepsilon(f)=\int_{\R^d}\frac{\delta\mathcal{H}_\varepsilon}{\delta f}\,\partial_t f \dif v
=-\int_{\R^d}\mm(f)\left|\nabla\;\!\frac{\delta\mathcal{H}_\varepsilon}{\delta f}\right|^2 \dif v. 
\end{align}
The Fermi-Dirac-type Fisher information is then defined as the dissipation rate of $\mathcal{H}_\varepsilon$ along the evolution, given by
\begin{align}\label{JJ}
\JJ(f):=\int_{\R^d} \mm(f)\left|\nabla\;\!\frac{\delta\mathcal{H}_\varepsilon}{\delta f}\right|^2 \dif v
=\int_{\R^d}f(1-\varepsilon f)\left|\frac{\nabla f}{f(1-\varepsilon f)}+v\right|^2 \dif v.
\end{align}
Based on the entropy dissipation \eqref{FDFP-I}, this identifies the natural quantum analogue of the classical Fisher information, incorporating the exclusion effect through the nonlinear mobility.

The equilibrium states, which are the free energy minimizers, take the form of the Fermi-Dirac distribution $\mbe$ with $\beta>0$, 
\begin{align}\label{mbe}
\mbe(v):=\frac{1}{\varepsilon+\beta\;\!e^{\;\!|v|^2/2}},
\end{align}
which are stationary solutions of \eqref{FDFP} and satisfy $\JJ(\mbe)=0$. The constant $\beta$ can be uniquely determined by the conserved total mass of densities, yielding a unique Fermi-Dirac equilibrium. 

\subsection{Relative form}\label{relative}
The free energy generically admits an equivalent description as a relative entropy with respect to a reference density. Concretely, for any two densities $f$ and $g$ valued in $(0,\varepsilon^{-1})$, one defines the Bregman divergence, 
\begin{align*}
\mathcal{E}_\varepsilon(f\;\!|\;\!g):=\mathcal{E}_\varepsilon(f)-\mathcal{E}_\varepsilon(g)-\left\langle\frac{\delta\mathcal{E}_\varepsilon}{\delta f}(g),\,f-g\right\rangle
=\int_{\R^d}\left[U_\varepsilon(f)-U_\varepsilon(g)-U_\varepsilon'(g)\,(f-g)\right] \dif v. 
\end{align*}
Now that $U_\varepsilon$ is convex, it measures the excess of $\mathcal{E}_\varepsilon(f)$ over its linear approximation at $g$. When the reference density is chosen to be the Fermi-Dirac equilibrium $\mbe$ with $\beta>0$ so that $\mbe$ carries the same mass as $f$, the identity $U_\varepsilon'(\mbe)=-\log\beta-\frac{|v|^2}{2}$ yields that  
\begin{align*}
\mathcal{E}_\varepsilon(f\;\!|\;\!\mbe)=\mathcal{H}_\varepsilon(f)-\mathcal{H}_\varepsilon(\mbe)+\log\beta\int_{\R^d}(f-\mbe)\dif v=\mathcal{H}_\varepsilon(f)-\mathcal{H}_\varepsilon(\mbe), 
\end{align*}
where the constant $\mathcal{H}_\varepsilon(\mbe)$ is the minimum free energy under the mass constraint. Thus, in mass-conserving dynamics, one may track either the free energy or the relative entropy. 

In this sense, the functional $\JJ$, interpreted as the entropy dissipation relative to Fermi-Dirac equilibria, can be regarded as the relative form of the Fermi-Dirac Fisher information $\mathcal{I}_\varepsilon$, which is defined by 
\begin{align}\label{II}
\mathcal{I}_\varepsilon(f):=\int_{\R^d} \mm(f) \left|\nabla\frac{\delta\mathcal{E}_{\varepsilon}}{\delta f}\right|^2 \dif v
=\int_{\R^d} \frac{|\nabla f|^2}{ f(1-\varepsilon f)} \dif v. 
\end{align}
The functional $\mathcal{I}_\varepsilon$ also arises naturally as the Fermi-Dirac entropy dissipation along the heat flow in $\R^d$. 

\subsection{Heat flow}
Prior to stating the main results for Fermi-Dirac-Fokker-Planck equation, we examine the Fermi-Dirac Fisher information for the heat equation on general manifolds as a preliminary case. 

Let $M$ be a smooth manifold without boundary equipped with a Riemannian metric $\langle\cdot,\cdot\rangle_M$. We denote by $|\cdot|_M$ the norm induced by the metric, and by $\Delta_M$ the Laplace-Beltrami operator on $M$. Consider a function $f:\R_+\times M\to(0,\varepsilon^{-1})$, for some fixed $\varepsilon>0$, satisfying the heat equation 
\begin{align}\label{heat}
\partial_t f=\Delta_M f. 
\end{align}
It can be rewritten as 
\begin{align}\label{heat-E}
\partial_t f={\rm div}\left[\mm(f)\,\nabla\;\!\frac{\delta\mathcal{E}_{\varepsilon,M}}{\delta f}\right] 
{\quad\rm for\quad} \frac{\delta\mathcal{E}_{\varepsilon,M}}{\delta f}=\log\frac{f}{1-\varepsilon f}, 
\end{align}
where $\mathcal{E}_{\varepsilon,M}$ denotes the Fermi-Dirac entropy on $M$, given by the same formula as $\mathcal{E}_\varepsilon$ in \eqref{EEntropy} with $\R^d$ and $\dif v$ replaced by $M$ and its volume form $\dif V_M$, respectively. 

The entropy dissipation along the heat flow yields the associated Fisher information. Specifically, by \eqref{heat-E} and integration by parts, we have
\begin{align*}
\frac{\dif}{\dif t}\,\mathcal{E}_{\varepsilon,M}(f)
=\int_M \frac{\delta\mathcal{E}_{\varepsilon,M}}{\delta f}\,\partial_tf \dif V_M
=-\int_M \mm(f) \left|\nabla\frac{\delta\mathcal{E}_{\varepsilon,M}}{\delta f}\right|_M^2 \dif V_M. 
\end{align*}
Accordingly, we introduce the Fermi-Dirac Fisher information for the heat equation on $M$ as 
\begin{align}\label{IIM}
\mathcal{I}_{\varepsilon,M}(f):=\int_M \mm(f) \left|\nabla\frac{\delta\mathcal{E}_{\varepsilon,M}}{\delta f}\right|_M^2 \dif V_M
=\int_M \frac{|\nabla f|^2}{ f(1-\varepsilon f)} \dif V_M.  
\end{align}
This functional reduces to the classical Fisher information when $\varepsilon=0$. 

Recall from the discussion in \S\!~\ref{relative} that the formula \eqref{JJ} for $\JJ$, expressed in terms of the free energy $\mathcal{H}_\varepsilon$, represents the relative form of the Fermi-Dirac Fisher information. By contrast, the definition \eqref{IIM} of $\mathcal{I}_{\varepsilon,M}$ involves the entropy functional $\mathcal{E}_{\varepsilon,M}$, since the Fermi-Dirac distribution like $\mbe$ is generally not an equilibrium state for the heat equation. 

\subsection{Main results}
\subsubsection{Heat equation}
The Fermi-Dirac Fisher information exhibits monotonicity along the heat flow on manifolds with non-negative Ricci curvature, analogous to the classical Fisher information. More precisely, we have the following result. 
\begin{theorem}\label{thm-heat}
For any $\varepsilon>0$ and any solution $f:\R_+\times M\to(0,\varepsilon^{-1})$ of \eqref{heat}, we have 
\begin{align*}
\frac{1}{2}\frac{\dif}{\dif t}\,\mathcal{I}_{\varepsilon,M}(f)
= -\int_M \mm(f) \big[|D^2\pe|_{\rm HS}^2 +\mathrm{Ric}\;\!(\nabla\pe,\nabla\pe)\big] + \varepsilon\;\!|\nabla f|_M^2|\nabla\pe|_M^2 \dif V_M, 
\end{align*}
where we abbreviate $\pe:=\log f-\log (1-\varepsilon f)$ and  $|\cdot|_{\rm HS}$ denotes the Hilbert-Schmidt norm. 
In particular, if the Ricci curvature of $M$ satisfies $\mathrm{Ric}\ge c_M$ for some constant $c_M\in\R$, then 
\begin{align*}
\frac{1}{2}\frac{\dif}{\dif t}\,\mathcal{I}_{\varepsilon,M}(f) \le - c_M\,\mathcal{I}_{\varepsilon,M}(f). 
\end{align*}
\end{theorem}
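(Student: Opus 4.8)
The plan is to follow the classical computation of the derivative of the Fisher information along the heat flow (in the spirit of McKean, Bakry--\'Emery and Toscani), keeping careful track of the extra terms generated by the nonlinearity of the mobility $\mm$. By parabolic regularity a solution valued in the open interval $(0,\varepsilon^{-1})$ is smooth, so the manipulations below are valid; every integration by parts uses that $M$ has empty boundary, and on a noncompact $M$ one assumes in addition sufficient spatial decay of $f$ (as holds in the solution classes of interest, and can otherwise be obtained by a cutoff exhaustion of $M$). Write $\pe=\log f-\log(1-\varepsilon f)=U_\varepsilon'(f)$ and, for brevity, $m=\mm(f)$; since $U_\varepsilon''(f)=m^{-1}$, one has $\nabla f=m\,\nabla\pe$, $\partial_t\pe=m^{-1}\Delta f$, ${\rm div}(m\nabla\pe)=\Delta f$ (which is \eqref{heat-E}), and, by \eqref{IIM}, $\mathcal{I}_{\varepsilon,M}(f)=\int_M m\,|\nabla\pe|_M^2\dif V_M$.

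\emph{Step 1: a pointwise identity for the derivative.} Differentiating under the integral, using $\partial_t m=\mm'(f)\,\Delta f$, commuting $\partial_t$ with $\nabla$, and integrating by parts, one gets
\begin{align*}
\frac{\dif}{\dif t}\,\mathcal{I}_{\varepsilon,M}(f)=\int_M \mm'(f)\,(\Delta f)\,|\nabla\pe|_M^2\dif V_M-2\int_M\frac{(\Delta f)^2}{m}\dif V_M.
\end{align*}
Expanding $\Delta f={\rm div}(m\nabla\pe)=m\,\Delta\pe+\mm'(f)\,m\,|\nabla\pe|_M^2$ and abbreviating $A:=\Delta\pe$ and $B:=\mm'(f)\,|\nabla\pe|_M^2$, so that $\Delta f=m(A+B)$, this reduces to the pointwise identity
\begin{align*}
\frac{\dif}{\dif t}\,\mathcal{I}_{\varepsilon,M}(f)=-\int_M m\,\big(2A^2+3AB+B^2\big)\dif V_M.
\end{align*}

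\emph{Step 2: inserting the curvature via Bochner's formula.} Multiplying Bochner's identity $\tfrac12\Delta|\nabla\pe|_M^2=|D^2\pe|_{\rm HS}^2+\langle\nabla\pe,\nabla\Delta\pe\rangle_M+\mathrm{Ric}(\nabla\pe,\nabla\pe)$ by $m$ and integrating over $M$, the left-hand side equals $\tfrac12\int_M(\Delta m)\,|\nabla\pe|_M^2\dif V_M$ by self-adjointness of $\Delta_M$, while integration by parts gives $\int_M m\,\langle\nabla\pe,\nabla\Delta\pe\rangle_M\dif V_M=-\int_M(\Delta f)\,\Delta\pe\dif V_M=-\int_M m(A+B)A\dif V_M$. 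The decisive input is $\Delta m=\mm''(f)\,|\nabla f|_M^2+\mm'(f)\,\Delta f=-2\varepsilon\,|\nabla f|_M^2+\mm'(f)\,m(A+B)$, in which the coefficient $\mm''(f)=-2\varepsilon$ is precisely the contribution absent when $\varepsilon=0$. Using $\mm'(f)|\nabla\pe|_M^2=B$ and $|\nabla f|_M^2=m^2|\nabla\pe|_M^2$, and collecting terms, one obtains
\begin{align*}
\int_M m\,\big[|D^2\pe|_{\rm HS}^2+\mathrm{Ric}(\nabla\pe,\nabla\pe)\big]\dif V_M=\int_M m\Big(A^2+\tfrac32 AB+\tfrac12 B^2\Big)-\varepsilon\,m^2|\nabla\pe|_M^4\dif V_M.
\end{align*}

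\emph{Step 3: conclusion.} Multiplying the last identity by $-2$ and comparing with the pointwise identity of Step 1 (the coefficients of $A^2$, $AB$, $B^2$ agree), then using $m^2|\nabla\pe|_M^4=|\nabla f|_M^2|\nabla\pe|_M^2$, one reaches exactly the stated formula for $\tfrac12\tfrac{\dif}{\dif t}\mathcal{I}_{\varepsilon,M}(f)$. For the corollary, $m=\mm(f)\ge 0$ and $\varepsilon|\nabla f|_M^2|\nabla\pe|_M^2\ge 0$, so dropping these two nonnegative contributions and invoking $\mathrm{Ric}\ge c_M$ gives $\tfrac12\tfrac{\dif}{\dif t}\mathcal{I}_{\varepsilon,M}(f)\le-\int_M m\,\mathrm{Ric}(\nabla\pe,\nabla\pe)\dif V_M\le-c_M\int_M m\,|\nabla\pe|_M^2\dif V_M=-c_M\,\mathcal{I}_{\varepsilon,M}(f)$. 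I expect the bookkeeping in Step 2 to be the main obstacle: the $\int_M m(\Delta\pe)^2$-type manipulations generate both a $\langle\nabla m,\nabla\pe\rangle$ term and a $\Delta m$ term, whose signs must be tracked so that every mixed $AB$ and $B^2$ contribution matches the ones coming out of Step 1; the genuinely new, $\varepsilon$-dependent piece $-2\varepsilon\int_M m^2|\nabla\pe|_M^4\dif V_M$ is the one forced by $\mm''\ne 0$, and it is exactly what distinguishes this identity from the classical Fisher information computation recovered at $\varepsilon=0$.
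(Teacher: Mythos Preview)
Your argument is correct and follows essentially the same route as the paper: differentiate $\mathcal{I}_{\varepsilon,M}$ in time, integrate by parts using $\Delta f=\mathrm{div}(m\nabla\pe)$, and invoke Bochner's formula, with the extra term $\mm''(f)=-2\varepsilon$ in $\Delta m$ producing the new contribution $\varepsilon\,|\nabla f|_M^2|\nabla\pe|_M^2$. The only difference is organizational: the paper first records the functional derivative $\frac{\delta\mathcal{I}_{\varepsilon,M}}{\delta f}=-\mm'|\nabla\pe|^2-2\Delta\pe$ and arrives directly at the Bochner-ready form $-\int_M\big(\tfrac12\,m\,\Delta|\nabla\pe|^2-m\langle\nabla\Delta\pe,\nabla\pe\rangle_M-\tfrac12\mm''|\nabla f|^2|\nabla\pe|^2\big)$, whereas you expand everything in the auxiliary quantities $A=\Delta\pe$, $B=\mm'|\nabla\pe|^2$ and match the coefficients $(2A^2+3AB+B^2)$ against $2\big(A^2+\tfrac32 AB+\tfrac12 B^2\big)$; both computations are equivalent and your coefficient matching checks out.
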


\subsubsection{Fermi-Dirac-Fokker-Planck equation}
We now investigate the monotonicity properties of the Fermi-Dirac-type Fisher information along solutions of \eqref{FDFP}. In contrast to the case of the heat flow, the Fermi-Dirac Fisher information $\JJ$ is not always monotone along the Fermi-Dirac-Fokker-Planck flow. For any quantum parameter $\varepsilon>0$, one can construct data for which the Fermi-Dirac Fisher information increases. Despite this general lack of monotonicity, the monotonicity of $\JJ$, as well as its exponential decay, can be recovered once the initial data satisfies a pointwise upper bound condition. 

\begin{theorem}\label{thm-FP}
Let $f:\R_+\times\R^d\to(0,\varepsilon^{-1})$ be a solution to \eqref{FDFP}. 
\begin{enumerate}[label=(\roman*), leftmargin=*, labelsep=0.5em]
\item\label{FD-c} For any $\varepsilon>0$, there exists $\alpha\ge1$ and $u\in\R^d$ such that 
\begin{align*}
\frac{\dif}{\dif t}\bigg|_{t=0}\JJ(f)>0
{\quad\rm for\quad}
\left.f\right|_{t=0}\!(v)=\frac{1}{\varepsilon+e^{\;\!\alpha\;\!|v-u|^2/2}}. 
\end{align*}
\item\label{FD-d} If $0\le\left.f\right|_{t=0}\le\mbe$ in $\R^d$ with the constants $\varepsilon,\beta$ satisfying $0\le4\varepsilon\le\beta$, then we have
\begin{align*}
\frac{1}{2}\frac{\dif}{\dif t}\,\JJ(f)
\le -\left(1-\frac{4\varepsilon}{\beta}\right)\JJ(f). 
\end{align*}
\end{enumerate}
\end{theorem}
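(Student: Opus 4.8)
I would prove both parts of Theorem~\ref{thm-FP} from a single pointwise-in-time identity for $\JJ(f)$ along \eqref{FDFP}, obtained by the Bakry--Émery/$\Gamma_2$ computation that already underlies Theorem~\ref{thm-heat}. Set $\pe:=\log f-\log(1-\varepsilon f)$ and $\phi:=\pe+\tfrac12|v|^2=\frac{\delta\mathcal{H}_\varepsilon}{\delta f}$, so that $\JJ(f)=\int_{\R^d}\mm(f)\,|\nabla\phi|^2\dif v$ and, by \eqref{FDFP-E} and $U_\varepsilon''(f)=\mm(f)^{-1}$, the function $\phi$ solves $\partial_t\phi=\mm(f)^{-1}\nabla\cdot(\mm(f)\nabla\phi)=:\mathcal{L}\phi$, where $\mathcal{L}$ is self-adjoint in $L^2(\mm(f)\dif v)$. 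Differentiating $\JJ(f)$ in time, using $\partial_t\mm(f)=(1-2\varepsilon f)\,\partial_t f$, integrating by parts, and invoking the integrated Bochner identity $\int(\mathcal{L}\phi)^2\mm(f)\dif v=\int\big(|D^2\phi|_{\rm HS}^2-D^2\!\log\mm(f)(\nabla\phi,\nabla\phi)\big)\mm(f)\dif v$ for $\mathcal{L}$ on the flat $\R^d$, together with $\nabla\log\mm(f)=(1-2\varepsilon f)\nabla\pe$, $\nabla f=\mm(f)\nabla\pe$ and $\nabla\pe=\nabla\phi-v$, the $D^2\phi(\nabla\phi,\nabla\phi)$ cross terms cancel and one is left with
\begin{align*}
\tfrac12\tfrac{\dif}{\dif t}\,\JJ(f)
&=-\int_{\R^d}\mm(f)\big(|D^2\phi|_{\rm HS}^2+|\nabla\phi|^2\big)\dif v\\
&\quad+\varepsilon\int_{\R^d}\big(2f\,\mm(f)\,|\nabla\phi|^2-2(\nabla f\cdot\nabla\phi)^2+\mm(f)\,(\nabla f\cdot\nabla\phi)\,|\nabla\phi|^2\big)\dif v.
\end{align*}
For $\varepsilon=0$ this is the classical identity behind $\tfrac12\tfrac{\dif}{\dif t}\JJ\le-\JJ$; all the work is in the $\varepsilon$-correction, into which the confining drift enters through $\nabla\pe=\nabla\phi-v$. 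The manipulations are justified by the regularity and rapid decay of solutions to \eqref{FDFP} (see \cite{CLR}).

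For part~\ref{FD-d} I would argue as follows. Since $\mbe$ is a stationary solution of \eqref{FDFP} and \eqref{FDFP} obeys the comparison principle, the hypothesis $f|_{t=0}\le\mbe$ propagates, so $f(t,\cdot)\le\mbe$ for all $t>0$; hence pointwise $\mm(f)\le f\le\mbe\le\beta^{-1}$ and, by the Gaussian tail of $\mbe$, also $\mm(f)\,|v|^2\le\mbe\,|v|^2\le\tfrac{2}{\beta e}$. Writing $\nabla f=\mm(f)(\nabla\phi-v)$ and setting $s:=|\nabla\phi|^2$, $q:=v\cdot\nabla\phi$ (so $|q|\le|v|\sqrt s$), one checks $-2(\nabla f\cdot\nabla\phi)^2+\mm(f)(\nabla f\cdot\nabla\phi)|\nabla\phi|^2=-\mm(f)^2(s-q)(s-2q)$, and one Young inequality on the $sq$-term bounds this by $\tfrac94\mm(f)^2|\nabla\phi|^2|v|^2$. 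Combining with $2f\mm(f)|\nabla\phi|^2\le\tfrac{2}{\beta}\mm(f)|\nabla\phi|^2$ and the two pointwise bounds, the whole $\varepsilon$-correction is at most $\big(2+\tfrac{9}{2e}\big)\tfrac{\varepsilon}{\beta}\JJ(f)$, which is $\le\tfrac{4\varepsilon}{\beta}\JJ(f)$ since $2+\tfrac{9}{2e}<4$; discarding $-\int\mm(f)\big(|D^2\phi|_{\rm HS}^2+|\nabla\phi|^2\big)\dif v\le-\JJ(f)$ then yields the claim. I expect this estimate to be the crux: the correction contains terms $\sim\varepsilon\,\mm(f)^2|\nabla\phi|^2|v|^2$ carrying an extra factor $|v|^2$ produced by the drift, which $\JJ(f)$ does not control on its own; dominating them is precisely why an upper bound forcing $\mm(f)$ to decay like a Gaussian (so $\mm(f)|v|^2$ is bounded) is needed, and $4\varepsilon\le\beta$ is what makes the resulting constant admissible.

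For part~\ref{FD-c} I would evaluate the same identity at $t=0$ on $f_0(v)=(\varepsilon+e^{\alpha|v-u|^2/2})^{-1}$. Then $\pe=-\tfrac{\alpha}{2}|v-u|^2$, so $\phi$ is a quadratic polynomial, $\nabla\phi=(1-\alpha)(v-u)+u$ is affine and $D^2\phi=(1-\alpha)\,\mathrm{Id}$ is constant; hence every integral in the identity reduces to a Gaussian-type moment of $\mm(f_0)$, $f_0\mm(f_0)$ or $\mm(f_0)^2$ against polynomials in $v-u$. Rescaling $v-u=z/\sqrt\alpha$ exhibits the powers of $\alpha$, and a short computation shows that the coefficient of $|u|^2$ in $\tfrac{\dif}{\dif t}\big|_{t=0}\JJ(f)$ equals $\alpha^{-d/2}\big(-\widetilde{A}_0+2\varepsilon\widetilde{B}_0+\varepsilon\widetilde{C}_2(\alpha-1-\tfrac{2}{d})\big)$ for some constants $\widetilde{A}_0,\widetilde{B}_0,\widetilde{C}_2>0$ depending only on $\varepsilon,d$. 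Thus for every $\varepsilon>0$ this coefficient is strictly positive once $\alpha$ is large enough, and then choosing $|u|$ large enough makes $\tfrac{\dif}{\dif t}\big|_{t=0}\JJ(f)>0$; here taking $\alpha>1$ rather than the stationary value $\alpha=1,\ u=0$ is forced precisely in the small-$\varepsilon$ regime.
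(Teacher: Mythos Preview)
Your proof is correct and follows essentially the same route as the paper. Your dissipation identity is exactly \eqref{FDDD} of Lemma~\ref{lemma-FP} (after expanding $\mm'=1-2\varepsilon f$), derived here via the Bakry--\'Emery $\Gamma_2$ computation for the time-dependent weighted Laplacian rather than the paper's direct integration by parts; your treatment of part~\ref{FD-d} reproduces Lemma~\ref{lemma-est} together with the same comparison-principle argument and numerical bound $2+\tfrac{9}{2e}<4$; and your part~\ref{FD-c} is the same explicit evaluation on shifted Fermi--Dirac data, isolating the $|u|^2$-coefficient (the paper's $-\mathcal{D}_1$) and making it positive by first taking $\alpha$ large and then $|u|$ large.
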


\begin{remark}
When $\varepsilon=0$, the estimates in Theorem~\ref{thm-heat} for the heat equation and in part~\ref{FD-d} of Theorem~\ref{thm-FP} for the Fokker-Planck equation coincide with the classical inequalities from the McKean-Toscani lemma \cite{McKean,Toscani}. Their arguments are based on the convexity of the Fisher information and the structure of the fundamental solutions. An alternative proof via direct computation can be found in \cite{Villani-landau-fisher}, and a related general formalism was developed in \cite{BE}. 
\end{remark}

\begin{remark}
The monotonicity of $\JJ$ along solutions of \eqref{FDFP} can also be ensured under a more refined condition, weaker than the pointwise upper bound in point~\ref{FD-d} of Theorem~\ref{thm-FP}; see Lemma~\ref{lemma-est}. 
\end{remark}

\begin{remark}
In Section~\ref{sec-LFP}, we discuss a linear-type Landau-Fermi-Dirac equation, for which Theorem~\ref{prop-landau} provides an analogue of Theorem~\ref{thm-FP}. 
\end{remark}

\subsection{Geometric structure}\label{flow}
Let us briefly mention another characterization of \eqref{FDFP} without delving into the details needed for full rigour. It exposes the structure of gradient flow encoded in the relation between \eqref{HHE} and \eqref{FDFP-E}. While not essential to the main results, it provides geometric insight into the formulation~\eqref{FDFP-E} and the entropy dissipation identity \eqref{FDFP-I}, and clarifies a role of the Fermi-Dirac-type Fisher information. This perspective also suggests possible generalizations within the framework of weighted Wasserstein geometry. 

Following Otto's calculus (see  \cite{Otto,CLSS}), one equips the space of densities $f$ taking values in $(0,\varepsilon^{-1})$ with a Riemannian metric weighted by the concave mobility function $\mm$. The norm of a tangent vector $p$ at a density $f$ is given by
\begin{align*}
\|p\|^2:=\inf\left\{ \int_{\R^d}\mm(f)\;\!|\nabla q|^2 \dif v :\ p+\nabla\cdot[\mm(f)\nabla q]=0\right\}, 
\end{align*}
which induces an inner product $\langle\!\!\!\!\;\langle\cdot,\cdot\rangle\!\!\!\!\;\rangle$ on tangent vectors. Denote by $\nabla\!\!\!\!\nabla$ the (Wasserstein) gradient and by $\nabla\!\!\!\!\nabla^2$ the corresponding Hessian. With respect to this metric, \eqref{FDFP-E} represents the gradient flow of $\mathcal{H}_\varepsilon$ and can be also written as
\begin{align*}
\partial_tf=-\nabla\!\!\!\!\nabla\mathcal{H}_\varepsilon(f). 
\end{align*} 
The Fermi-Dirac Fisher information $\JJ(f)$ is in fact the squared norm of the gradient, appearing as the dissipation of $\mathcal{H}_\varepsilon$ along the flow, 
\begin{align*}
\frac{\dif}{\dif t}\,\mathcal{H}_\varepsilon(f)=-\|\nabla\!\!\!\!\nabla\mathcal{H}_\varepsilon(f)\|^2
=-\int_{\R^d} \mm(f)\left|\nabla\;\!\frac{\delta\mathcal{H}_\varepsilon}{\delta f}\right|^2 \dif v. 
\end{align*}
Formally differentiating once more in time gives the Fisher dissipation, 
\begin{align*}
\frac{\dif^2}{\dif t^2}\mathcal{H}_\varepsilon(f) 
= -\frac{\dif}{\dif t}\, \|\nabla\!\!\!\!\nabla\mathcal{H}_\varepsilon(f)\|^2
= 2\left\langle\!\!\left\langle \nabla\!\!\!\!\nabla^2\mathcal{H}_\varepsilon(f)\,\nabla\!\!\!\!\nabla\mathcal{H}_\varepsilon(f),\,\nabla\!\!\!\!\nabla\mathcal{H}_\varepsilon(f) \right\rangle\!\!\right\rangle. 
\end{align*}
Hence the monotonic decay of the Fermi-Dirac Fisher information is linked to  the notion of geodesic convexity of $\mathcal{H}_\varepsilon$, which corresponds to the positivity of $\nabla\!\!\!\!\nabla^2\mathcal{H}_\varepsilon$ along geodesics (see \cite{McCann,CLSS} for details). Recall that $\mathcal{H}_\varepsilon$ in our case \eqref{HHE} splits into the internal energy $\mathcal{E}_\varepsilon$ and a potential energy. It has been noticed in \cite{CLSS} that the internal energy $\mathcal{E}_\varepsilon$ is convex in the weighted metric, whereas the nonlinear mobility $\mm$ may spoil the geodesic convexity of potential energies. 

\subsection{Organization of the paper}
In Section~\ref{sec-heat}, we establish the monotonicity properties of the Fermi-Dirac Fisher information for the heat equation and a related model equation. The proofs of the main results concerning the Fermi-Dirac-Fokker-Planck equation are presented in Section~\ref{sec-FP}. We conclude in Section~\ref{sec-LFP} with a discussion of the linear-type Landau-Fermi-Dirac equation based on the results from the preceding sections. 

\section{Fermi-Dirac Fisher information along heat flows}\label{sec-heat}
This section proves Theorem~\ref{thm-heat} and examines a model equation related to the heat equation on the sphere. To begin, we introduce some convenient notation. For a function $f$ on the manifold $M$ with values in $(0,\varepsilon^{-1})$, we recall the definitions
\begin{align*}
\mm&=\mm(f)=f\;\!(1-\varepsilon f),\\
\pe&=\pe(f)=\log\frac{f}{1-\varepsilon f}.
\end{align*}
Throughout this section, we write $\Delta=\Delta_M$ for the Laplace-Beltrami operator on $M$, and denote by $\langle\cdot,\cdot\rangle_M$ the Riemannian metric on $M$ with the induced norm $|\cdot|=|\cdot|_M$.
By definition, we have 
\begin{align}\label{hpsi-heat}
\begin{aligned}
&\mm\pe'=1\\
&\nabla\mm = \mm'\;\!\nabla f= \mm\mm'\;\!\nabla\pe, \\
&\langle\nabla\mm,\nabla\pe\rangle_M = \mm\mm'\;\!|\nabla\pe|^2, \\
&\Delta\mm=\mm'\;\!\Delta f+\mm''\;\!|\nabla f|^2. 
\end{aligned}
\end{align}
The Fermi-Dirac Fisher information for the heat equation, defined in \eqref{IIM}, is recast as
\begin{align*}
\mathcal{I}_{\varepsilon,M}(f)=\int_M \mm\;\!|\nabla\pe|^2 \dif V_M. 
\end{align*}
Henceforth, the volume form $\dif V_M$ is omitted from all integrals, provided no ambiguity arises. 

The first variation of $\mathcal{I}_{\varepsilon,M}$ can be calculated by taking the Gateaux derivative and applying integration by parts. Indeed, by \eqref{hpsi-heat}, we see that for any $g\in C_c^\infty(M)$,  
\begin{align*}
\left\langle\frac{\delta\mathcal{I}_{\varepsilon,M}}{\delta f},\,g\right\rangle
&=\int_M g\,\mm'\;\!|\nabla\pe|^2 + 2\;\!\mm\langle\nabla\pe,\nabla(g\;\!\pe')\rangle_M\\
&=\int_M g\,\mm'\;\!|\nabla\pe|^2 - 2g\,\Delta\pe -2g\,\pe'\langle\nabla\mm,\nabla\pe\rangle_M\\
&=-\int_M g\,\mm'\;\!|\nabla\pe|^2 + 2g\,\Delta\pe,
\end{align*}
which implies that 
\begin{align}\label{DII}
\frac{\delta\mathcal{I}_{\varepsilon,M}}{\delta f}=-\mm'\;\!|\nabla\pe|^2 - 2\;\!\Delta\pe.
\end{align}

\subsection{Derivative of Fermi-Dirac Fisher information along the heat flow} 
The formula in Theorem~\ref{thm-heat} is obtained by direct computation and simplification via integration by parts. 
\begin{proof}[Proof of Theorem~\ref{thm-heat}]
By \eqref{DII}, we have that for $f:\R_+\times M\to(0,\varepsilon^{-1})$ satisfying \eqref{heat}, 
\begin{align}\label{d-heat}
\frac{\dif}{\dif t}\,\mathcal{I}_{\varepsilon,M}(f)
=\int_M \frac{\delta\mathcal{I}_{\varepsilon,M}}{\delta f}\, \partial_t f
=-\int_M \mm'\;\!|\nabla\pe|^2\;\!\Delta f -2 \int_M \Delta\pe\,  \Delta f.
\end{align}
By \eqref{hpsi-heat} and integration by parts, we recast the first term on the right-hand side of \eqref{d-heat} as
\begin{align}\label{d-heat1}
\begin{aligned}
\int_M \mm'\;\!|\nabla\pe|^2\Delta f
&= \int_M |\nabla\pe|^2\;\!\Delta\mm-\mm''\;\!|\nabla f|^2|\nabla\pe|^2\\
&= \int_M \mm\;\! \Delta |\nabla\pe|^2 - \mm''\;\!|\nabla f|^2|\nabla\pe|^2.
\end{aligned}
\end{align}
Applying \eqref{hpsi-heat} and integration by parts for the second term of \eqref{d-heat} yields that 
\begin{align*}
\int_M \Delta \pe\,\Delta f
= \int_M - \mm\langle\nabla \Delta\pe,\nabla\pe\rangle_M. 
\end{align*}
Combining this with \eqref{d-heat} and \eqref{d-heat1}, we derive 
\begin{align}\label{II-heat}
\frac{1}{2}\frac{\dif}{\dif t}\,\mathcal{I}_{\varepsilon,M}(f)
= - \int_M \frac{1}{2}\,\mm\;\!\Delta |\nabla\pe|^2 - \mm\langle\nabla \Delta\pe,\nabla\pe\rangle_M - \frac{1}{2}\mm''\;\!|\nabla f|^2|\nabla\pe|^2.
\end{align}
It then follows from Bochner's formula that 
\begin{align*}
\frac{1}{2}\frac{\dif}{\dif t}\,\mathcal{I}_{\varepsilon,M}(f)
= - \int_M \mm\;\!|D^2\pe|_{\rm HS}^2 +\mm\;\!\mathrm{Ric}\left(\nabla\pe,\nabla\pe\right) - \frac{1}{2}\mm''\;\!|\nabla f|^2|\nabla\pe|^2. 
\end{align*}
Noticing that $\mm''=-2\varepsilon$, we see the formula for the time derivative of $\mathcal{I}_{\varepsilon,M}(f)$ as claimed. 

This further implies that 
\begin{align*}
\frac{1}{2}\frac{\dif}{\dif t}\,\mathcal{I}_{\varepsilon,M}(f)
\le - \int_M \mm\;\!\mathrm{Ric}\left(\nabla\pe,\nabla\pe\right) . 
\end{align*}
If the Ricci curvature of $M$ is bounded below by a constant $c_M$, then the desired estimate follows.
\end{proof}

\subsection{Spherical heat equation model}
Since heat flows do not admit Fermi-Dirac-type equilibria, we focus here on a heat equation model with spherical diffusion in $\R^d$. To highlight the connection with the Fermi-Dirac-Fokker-Planck equation, which will be discussed in the next section, we consider the relative form of the Fermi-Dirac Fisher information \eqref{JJ}, reformulated as
\begin{align*}
\JJ(f)=\int_{\R^d} \mm\;\!|\nabla\pe+v|^2 \dif v. 
\end{align*}
Let the dimension $d\ge2$, and let the generators of rotations be defined by 
\begin{align}\label{rotation}
\Omega_{jk}:=v_j\partial_{v_k}-v_k\partial_{v_j} {\quad\rm for\quad}j,k=1,\ldots,d. 
\end{align}
We consider the model equation 
\begin{align}\label{model}
\partial_t f= \sum\nolimits_{j<k}\Omega_{jk}^2\;\!f. 
\end{align}
Upon restricting functions to the unit sphere $\mathbb{S}^{d-1}$, the operator $\sum_{j<k}\Omega_{jk}^2$ is nothing but the Laplace-Beltrami operator $\Delta_{\mathbb{S}^{d-1}}$. In this setting, the Fermi-Dirac distribution $\mbe$ is a stationary solution of \eqref{model}. 

It turns out that the functional $\JJ$ is not necessarily decreasing along solutions of \eqref{model}. 

\begin{proposition}\label{prop-model}
Let $d\ge2$, and let $f:\R_+\times\R^d\to(0,\varepsilon^{-1})$ be a solution to \eqref{model}. 
\begin{enumerate}[label=(\roman*), leftmargin=*, labelsep=0.5em]
\item\label{model1} For any $\varepsilon,\alpha>0$, there exists $u\in\R^d$ such that 
\begin{align*}
\frac{\dif}{\dif t}\bigg|_{t=0}\JJ(f)>0
{\quad\rm for\quad}
\left.f\right|_{t=0}\!(v)=\frac{1}{\varepsilon+e^{\;\!\alpha\;\!|v-u|^2/2}}. 
\end{align*}
\item\label{model2} Let $\Pe:=\pe+\frac{|v|^2}{2}$. It holds that 
\begin{align*}
\frac{1}{2}\frac{\dif}{\dif t}\,\JJ(f)
\le \varepsilon  \int_{\R^d} |v|^4\;\!\mm^2\,|\nabla\Pe|^2  \dif v 
\le \varepsilon\sup\nolimits_{\,\R^d}\left(|v|^4\;\!\mm\right)\JJ(f). 
\end{align*}
\end{enumerate}
\end{proposition}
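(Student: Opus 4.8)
The plan is to differentiate $\JJ$ along each flow through its first variation, following the scheme of the proof of Theorem~\ref{thm-heat}. Write $\mm=\mm(f)$, $\pe=\pe(f)$, $\Pe=\pe+\tfrac{1}{2}|v|^2$, and set $\Gamma(g):=\sum_{j<k}(\Omega_{jk}g)^2=|v|^2|\nabla g|^2-\langle v,\nabla g\rangle^2$; recall $\mm\,\nabla\pe=\nabla f$, and note that each $\Omega_{jk}$ annihilates radial functions, so that $\Omega_{jk}\Pe=\Omega_{jk}\pe$, $\Gamma(\Pe)=\Gamma(\pe)$, and $\Omega_{jk}(|v|^2)=0$. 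Since $2\int\langle\nabla f,v\rangle=-2d\int f$, one has $\JJ(f)=\mathcal{I}_\varepsilon(f)+\int\mm|v|^2-2d\int f$, and combining \eqref{DII} (with $M=\R^d$) with the elementary variations of the last two terms gives
\begin{align*}
\frac{\delta\JJ}{\delta f}=-\mm'\,|\nabla\pe|^2-2\,\Delta\pe+\mm'\,|v|^2-2d.
\end{align*}

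For part~\ref{model2} I would insert this into $\frac{\dif}{\dif t}\JJ(f)=\int_{\R^d}\frac{\delta\JJ}{\delta f}\sum_{j<k}\Omega_{jk}^2f$. The part $-\mm'|\nabla\pe|^2-2\Delta\pe$ reproduces $\frac{\dif}{\dif t}\mathcal{I}_\varepsilon(f)$ along \eqref{model} by \eqref{DII}; the $-2d$ part integrates to zero since $\int\sum_{j<k}\Omega_{jk}^2f=0$; and integrating $\int\mm'|v|^2\sum_{j<k}\Omega_{jk}^2f$ by parts, using $\Omega_{jk}(\mm'|v|^2)=\mm''(\Omega_{jk}f)|v|^2=-2\varepsilon(\Omega_{jk}f)|v|^2$, produces $2\varepsilon\int|v|^2\Gamma(f)$. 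Hence, along \eqref{model},
\begin{align*}
\frac{\dif}{\dif t}\JJ(f)=\frac{\dif}{\dif t}\mathcal{I}_\varepsilon(f)+2\varepsilon\int_{\R^d}|v|^2\Gamma(f).
\end{align*}
Since $\Gamma(f)=\mm^2\Gamma(\pe)=\mm^2\Gamma(\Pe)\le\mm^2|v|^2|\nabla\Pe|^2$, the error term gives $\varepsilon\int|v|^2\Gamma(f)\le\varepsilon\int|v|^4\mm^2|\nabla\Pe|^2\le\varepsilon\sup_{\R^d}(|v|^4\mm)\JJ(f)$, so it remains to prove $\frac{\dif}{\dif t}\mathcal{I}_\varepsilon(f)\le0$ along \eqref{model}. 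For this I would use that $\Omega_{jk}$ commutes with the radial derivative $\partial_r$, so $\sum_{j<k}\Omega_{jk}^2$ acts on each sphere $\{|v|=r\}$ as the unit-sphere Laplacian $\Delta_{\mathbb{S}^{d-1}}$ in the angular variable; thus in polar coordinates $v=r\omega$, both $\omega\mapsto f(t,r\omega)$ and $\omega\mapsto\partial_rf(t,r\omega)$ solve the heat equation on $\mathbb{S}^{d-1}$. Writing
\begin{align*}
\mathcal{I}_\varepsilon(f)=\int_0^\infty r^{d-1}\!\int_{\mathbb{S}^{d-1}}\frac{(\partial_rf)^2}{\mm}\,\dif\sigma\,\dif r+\int_0^\infty r^{d-3}\,\mathcal{I}_{\varepsilon,\mathbb{S}^{d-1}}\big(f(t,r\,\cdot)\big)\,\dif r,
\end{align*}
the second integrand is nonincreasing by Theorem~\ref{thm-heat} on $M=\mathbb{S}^{d-1}$, where $\mathrm{Ric}=(d-2)\ge0$ for $d\ge2$; for the first, differentiating and integrating by parts on $\mathbb{S}^{d-1}$ yields $\frac{\dif}{\dif t}\int_{\mathbb{S}^{d-1}}\frac{(\partial_rf)^2}{\mm}\dif\sigma=-2\int_{\mathbb{S}^{d-1}}\big(\tfrac{1}{\mm}|\nabla h|^2-\tfrac{2\mm'}{\mm^2}h\langle\nabla h,\nabla f\rangle+\tfrac{\varepsilon\mm+(\mm')^2}{\mm^3}h^2|\nabla f|^2\big)\dif\sigma$ with $h=\partial_rf$, an integrand which, as a quadratic form in $(\nabla h,\,h\nabla f)$, is nonnegative because its discriminant condition reduces exactly to $\varepsilon\mm\ge0$. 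Adding the two contributions gives $\frac{\dif}{\dif t}\mathcal{I}_\varepsilon(f)\le0$, completing part~\ref{model2}.

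For part~\ref{model1} I would evaluate $\frac{\dif}{\dif t}\big|_{t=0}\JJ(f)=\int\frac{\delta\JJ}{\delta f}(f_0)\sum_{j<k}\Omega_{jk}^2f_0$ explicitly for $f_0=(\varepsilon+e^{\alpha|v-u|^2/2})^{-1}$. Everything here is elementary: $\pe(f_0)=-\tfrac{\alpha}{2}|v-u|^2$, $\Delta\pe(f_0)=-\alpha d$, and because $\Omega_{jk}$ annihilates functions of $|v|$ but not of $|v-u|$, $\sum_{j<k}\Omega_{jk}^2\pe(f_0)=-\alpha(d-1)\langle u,v\rangle$ and $\Gamma(\pe(f_0))=\alpha^2(|v|^2|u|^2-\langle v,u\rangle^2)$, whence $\sum_{j<k}\Omega_{jk}^2f_0=\mm\big[-\alpha(d-1)\langle u,v\rangle+\mm'\alpha^2(|v|^2|u|^2-\langle v,u\rangle^2)\big]$. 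The substitution $v=w+u$ turns $\mm,\mm'$ into radial functions of $w$ and the integral into a polynomial in $|u|^2$ of degree two; simplifying the top coefficient with the identity $\int_{\R^d}\mm\mm'\dif w=\tfrac{\alpha}{d}\int_{\R^d}\mm(\mm')^2|w|^2\dif w-\tfrac{2\alpha\varepsilon}{d}\int_{\R^d}\mm^2|w|^2\dif w$ (obtained by integrating $\nabla\cdot(\mm\mm'w)$ over $\R^d$), the coefficient of $|u|^4$ collapses to $\tfrac{2\alpha^2\varepsilon(d-1)}{d}\int_{\R^d}\mm^2|w|^2\dif w>0$, so $\frac{\dif}{\dif t}\big|_{t=0}\JJ(f)>0$ for all sufficiently large $|u|$.

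I expect the main obstacle to be the monotonicity $\frac{\dif}{\dif t}\mathcal{I}_\varepsilon(f)\le0$ along \eqref{model}, and within it the radial component: $\int_{\mathbb{S}^{d-1}}(\partial_rf)^2/\mm$ is not a Fisher-type functional, so its decay cannot simply be quoted, and instead rests on the (clean but not a priori obvious) positive semidefiniteness of the quadratic form above, which holds precisely because $\varepsilon\mm\ge0$. Secondary technical matters are the rigorous justification of the polar splitting, of interchanging $\dif/\dif t$ with the $r$-integration, and of the finiteness of $\sup_{\R^d}(|v|^4\mm)$, which require mild decay and smoothness of $f$; part~\ref{model1} is otherwise purely computational once $f_0$ is inserted, the only non-routine ingredient being the integration-by-parts identity that exposes the sign of the leading coefficient.
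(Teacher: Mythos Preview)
Your proposal is correct, and part~\ref{model1} is essentially the paper's computation: both insert the explicit profile, pass to $w=v-u$, and read off that the leading term in $|u|$ is positive; you isolate the $|u|^4$ coefficient via the divergence identity $\int\nabla\cdot(\mm\mm'w)=0$, while the paper simply observes the dominant $-|u|^2|w|^2$ contribution in the integrand.

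For part~\ref{model2} the two proofs diverge in how they establish $\frac{\dif}{\dif t}\mathcal{I}_\varepsilon(f)\le0$ along \eqref{model}. The paper proves the flat Bochner-type identity
\[
\tfrac{1}{2}\,\Omega_{jk}^2|\nabla\pe|^2-\nabla\Omega_{jk}^2\pe\cdot\nabla\pe=|\nabla\Omega_{jk}\pe|^2,
\]
which yields the exact dissipation formula (Lemma~\ref{lemma-model}) with the two manifestly nonnegative terms $\mm|\nabla\Omega_{jk}\pe|^2$ and $\varepsilon|\nabla\pe|^2|\Omega_{jk}f|^2$; this single identity serves both part~\ref{model1} and part~\ref{model2} and avoids any coordinate splitting. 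Your route is more structural: the polar decomposition reduces the angular contribution to Theorem~\ref{thm-heat} on $\mathbb{S}^{d-1}$ (with $\mathrm{Ric}=(d-2)\ge0$), and the radial contribution to the positivity of a $2\times2$ quadratic form whose discriminant is exactly $\varepsilon/\mm^3\ge0$. This recycles the manifold result already in hand and makes transparent that the radial monotonicity hinges precisely on the sign of $\varepsilon$, at the cost of a less explicit dissipation identity and the extra technical care (commutation $[\partial_r,\Omega_{jk}]=0$, interchange of $\dif/\dif t$ with the $r$-integral) that you flag.
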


Let us first derive the derivative of $\JJ$ along solutions of \eqref{model}. 
\begin{lemma}\label{lemma-model}
For any solution $f:\R_+\times\R^d\to(0,\varepsilon^{-1})$ of \eqref{model}, we have 
\begin{align}\label{JJ-model}
\frac{1}{2}\frac{\dif}{\dif t}\,\JJ(f)
= -\sum_{j<k} \int_{\R^d} \mm|\nabla\Omega_{jk}\;\!\pe|^2+\varepsilon \left(|\nabla\pe|^2-|v|^2\right) |\Omega_{jk} f|^2  \dif v. 
\end{align}
\end{lemma}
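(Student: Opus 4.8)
The plan is to follow the pattern behind the proof of Theorem~\ref{thm-heat}, with the Laplacian replaced by $L:=\sum_{j<k}\Omega_{jk}^2$, exploiting that each rotation generator $\Omega_{jk}$ is a divergence-free vector field which annihilates radial functions and commutes with $\Delta$. First I would record the first variation of $\JJ$. Writing $\JJ(f)=\int_{\R^d}\mm\,|\nabla\Pe|^2\dif v$ and using $\mm\,\pe'=1$ (so that $\delta\Pe=\delta\pe=\delta f/\mm$), a Gateaux derivative followed by one integration by parts gives, exactly as in \eqref{DII},
\begin{align*}
\frac{\delta\JJ}{\delta f}=\mm'\,|\nabla\Pe|^2-\frac{2}{\mm}\,\nabla\cdot\big(\mm\,\nabla\Pe\big)
=\mm'\,|\nabla\Pe|^2-2\,\mm'\,\nabla\pe\cdot\nabla\Pe-2\,\Delta\Pe,
\end{align*}
where the last equality uses $\nabla\mm=\mm\,\mm'\,\nabla\pe$ from \eqref{hpsi-heat}. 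Since $\Omega_{jk}$ is divergence-free, one has $\int_{\R^d}(\Omega_{jk}g)\,h=-\int_{\R^d}g\,(\Omega_{jk}h)$ for suitably decaying $g,h$, so $\Omega_{jk}^2$ is self-adjoint and
\begin{align*}
\frac{1}{2}\frac{\dif}{\dif t}\,\JJ(f)
=\frac{1}{2}\int_{\R^d}\frac{\delta\JJ}{\delta f}\,\partial_tf
=-\frac{1}{2}\sum_{j<k}\int_{\R^d}\Big(\Omega_{jk}\frac{\delta\JJ}{\delta f}\Big)\,\big(\Omega_{jk}f\big).
\end{align*}

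The core of the argument is then the computation of $\Omega_{jk}\frac{\delta\JJ}{\delta f}$. Here I would use four facts: (i) $\Omega_{jk}$ acts as a derivation, with $\Omega_{jk}f=\mm\,\Omega_{jk}\pe$ and $\Omega_{jk}\mm'=\mm''\,\Omega_{jk}f=-2\varepsilon\,\Omega_{jk}f$ (recall $\mm''=-2\varepsilon$); (ii) $\Omega_{jk}|v|^2=0$, hence $\Omega_{jk}\Pe=\Omega_{jk}\pe$; (iii) $[\Omega_{jk},\Delta]=0$, since $\Delta$ is invariant under the rotations generated by $\Omega_{jk}$; and (iv) the commutator $[\Omega_{jk},\partial_{v_\ell}]=-\delta_{j\ell}\partial_{v_k}+\delta_{k\ell}\partial_{v_j}$ represents an infinitesimal rotation, so that $\Omega_{jk}\nabla g=\nabla\Omega_{jk}g+M^{(jk)}\nabla g$ for a fixed skew-symmetric matrix $M^{(jk)}$. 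Because $M^{(jk)}$ is skew, $\Omega_{jk}|\nabla g|^2=2\,\nabla g\cdot\nabla\Omega_{jk}g$, and more generally the $M^{(jk)}$-contributions cancel in every symmetric bilinear expression that appears. Carrying out the differentiation, the two terms proportional to $\mm'\,\nabla\Pe\cdot\nabla\Omega_{jk}\pe$ cancel against each other, the algebraic identity $2\,\nabla\pe\cdot\nabla\Pe-|\nabla\Pe|^2=|\nabla\pe|^2-|v|^2$ (using $\nabla\Pe=\nabla\pe+v$) collects the remaining $\varepsilon$-terms, and $\nabla\mm=\mm\,\mm'\,\nabla\pe$ repackages the rest into a divergence, yielding
\begin{align*}
\Omega_{jk}\frac{\delta\JJ}{\delta f}
=-\frac{2}{\mm}\,\nabla\cdot\big(\mm\,\nabla\Omega_{jk}\pe\big)+2\,\varepsilon\,\big(|\nabla\pe|^2-|v|^2\big)\,\Omega_{jk}f.
\end{align*}

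Substituting this into the previous display, using $\Omega_{jk}f/\mm=\Omega_{jk}\pe$ to rewrite the first contribution as $\Omega_{jk}\pe\,\nabla\cdot(\mm\nabla\Omega_{jk}\pe)$ and integrating by parts once more turns it into $-\mm\,|\nabla\Omega_{jk}\pe|^2$; the second contribution gives $-\varepsilon(|\nabla\pe|^2-|v|^2)|\Omega_{jk}f|^2$ directly, and summing over $j<k$ produces \eqref{JJ-model}. The step I expect to be the main obstacle is the computation of $\Omega_{jk}\frac{\delta\JJ}{\delta f}$: one must keep careful track of the difference between applying $\Omega_{jk}$ componentwise to the vector field $\nabla\Pe$ and forming $\nabla(\Omega_{jk}\Pe)$, and then verify that every first-order remainder — those coming from $M^{(jk)}$ and from $\Omega_{jk}\mm'$ — either cancels against its partner or collapses into the single term $\varepsilon(|\nabla\pe|^2-|v|^2)|\Omega_{jk}f|^2$. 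In contrast with the heat-flow case of Theorem~\ref{thm-heat}, no Bochner identity is needed here, since the surviving second-order terms organize themselves directly into $\mm\,|\nabla\Omega_{jk}\pe|^2$. Throughout, the integrations by parts require enough decay of $f$ and its derivatives at infinity to discard boundary terms (note that $\Omega_{jk}$ has linear coefficients), which I would justify, or simply assume, at the level of regularity used elsewhere in the paper.
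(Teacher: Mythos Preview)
Your argument is correct, and it arrives at \eqref{JJ-model} by a genuinely different route from the paper. The paper first splits $\JJ(f)=\mathcal{I}_\varepsilon(f)-2d\int f+\int |v|^2\mm$, differentiates each piece under \eqref{model}, reuses the derivation \eqref{II-heat} with $\Omega_{jk}^2$ in place of $\Delta$, and finishes via the second-order Bochner-type identity $\tfrac12\,\Omega_{jk}^2|\nabla\pe|^2-\nabla\Omega_{jk}^2\pe\cdot\nabla\pe=|\nabla\Omega_{jk}\pe|^2$. You instead work directly with the first variation $\delta\JJ/\delta f$, integrate by parts once against the skew-adjoint field $\Omega_{jk}$ to reduce to $-\tfrac12\sum\int(\Omega_{jk}\,\delta\JJ/\delta f)(\Omega_{jk}f)$, and then compute $\Omega_{jk}(\delta\JJ/\delta f)$ using only first-order commutators $[\Omega_{jk},\partial_\ell]$; the skew-symmetry of $M^{(jk)}$ kills the commutator contributions in the symmetric bilinear terms, the $\mm'\nabla\Pe\cdot\nabla\Omega_{jk}\pe$ terms cancel, and the identity $2\nabla\pe\cdot\nabla\Pe-|\nabla\Pe|^2=|\nabla\pe|^2-|v|^2$ packages the $\varepsilon$-terms. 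The paper's route makes the parallel with Theorem~\ref{thm-heat} and the role of the Bochner identity explicit and reuses prior computations; your route is self-contained, avoids the second-order Bochner formula altogether, and exposes the cancellation structure directly at the level of one $\Omega_{jk}$-derivative of the functional derivative. Your remark that ``no Bochner identity is needed'' is accurate in this sense: the integration by parts against $\Omega_{jk}$ trades the second-order identity for first-order commutator algebra.
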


\begin{proof}
By definition and integration by parts, we have 
\begin{align*}
\JJ(f)&= \int_{\R^d}\left(\mm|\nabla\pe|^2+2\;\!v\cdot\nabla f +|v|^2\;\!\mm\right)\dif v\\
&= \mathcal{I}_\varepsilon(f) -2d\int_{\R^d} f \dif v+\int_{\R^d} |v|^2\;\!\mm  \dif v. 
\end{align*}
Similarly to the last identity in \eqref{hpsi-heat}, we observe
\begin{align*}
\partial_t\;\!\mm=\sum\nolimits_{j<k}\Omega_{jk}^2\;\!\mm -\mm''\;\!|\Omega_{jk} f|^2. 
\end{align*} 
It then follows from integration by parts that 
\begin{align*}
\frac{\dif}{\dif t}\,\JJ(f) = \frac{\dif}{\dif t}\,\mathcal{I}_\varepsilon(f) - \sum_{j<k}\int_{\R^d}\mm''\;\!|v|^2|\Omega_{jk} f|^2  \dif v.
\end{align*}
The same derivation as \eqref{II-heat} yields
\begin{align*}
\frac{1}{2}\frac{\dif}{\dif t}\,\mathcal{I}_\varepsilon(f)
=-\sum_{j<k} \int_{\R^d} \frac{1}{2}\,\mm\;\!\Omega_{jk}^2 |\nabla\pe|^2 - \mm\nabla \Omega_{jk}^2\,\pe\cdot\nabla\pe - \frac{1}{2}\mm''\;\!|\Omega_{jk} f|^2\,|\nabla\pe|^2  \dif v.  
\end{align*}
The version of Bochner's formula relevant in this context reads
\begin{align*}
\frac{1}{2}\,\Omega_{jk}^2 |\nabla\pe|^2 - \nabla\Omega_{jk}^2\,\pe\cdot\nabla\pe = |\nabla\Omega_{jk}\;\!\pe|^2. 
\end{align*}
Combining the above three identities, we arrive at the desired result. 
\end{proof}

It remains to show that the Fisher dissipation given in \eqref{JJ-model} does not have a definite sign.  
\begin{proof}[Proof of Proposition~\ref{prop-model}]
Let the initial distribution ($t=0$) be of the Fermi-Dirac type, with $\alpha>0$ and $u\in\R^d$ to be specified later, that is, 
\begin{align*}
f(v)&=\frac{1}{\varepsilon+e^{\;\!\alpha\;\!|v-u|^2/2}} ,\\
\mm(v)&=f(1-\varepsilon f) =\frac{e^{\;\!\alpha\;\!|v-u|^2/2}}{(\varepsilon+e^{\;\!\alpha\;\!|v-u|^2/2})^2},\\
\pe(v)&=\log\frac{f}{1-\varepsilon f}=-\frac{\alpha\;\!|v-u|^2}{2}.
\end{align*}
A direct computation shows that
\begin{align*}
&\Omega_{jk}\pe=\alpha\;\!(v_ju_k-v_ku_j),\qquad \nabla\Omega_{jk}\pe=\alpha\;\!(e_ju_k-e_ku_j),\\
&\nabla\pe=-\alpha\;\!(v-u),\qquad \Omega_{jk} f=\alpha\;\!(v_ju_k-v_ku_j)\;\!\mm. 
\end{align*}
By applying \eqref{JJ-model} of Lemma~\ref{lemma-model}, performing a change of variables $w=v-u$, and taking into account that the odd part of the integrand vanishes upon integration, we deduce that 
\begin{align*}
\frac{1}{2}\frac{\dif}{\dif t}\bigg|_{t=0}\JJ(f)
&= -\alpha^2\sum_{j<k} \int_{\R^d} \left(u_j^2+u_k^2\right)\mm + \varepsilon\big(\alpha^2|v-u|^2-|v|^2\big) (v_ju_k-v_ku_j)^2\;\!\mm^2 \dif v\\
&=-\alpha^2\int_{\R^d} (d-1)\;\!|u|^2\mm + \varepsilon \big(\alpha^2|v-u|^2-|v|^2\big) \left[|v|^2|u|^2-(v\cdot u)^2\right] \mm^2 \dif v\\
&=-\alpha^2(d-1)\;\!|u|^2\!\int_{\R^d} \frac{e^{\;\!\alpha\;\!|w|^2/2}}{(\varepsilon+e^{\;\!\alpha\;\!|w|^2/2})^2} + \frac{\varepsilon\big[(\alpha^2-1)|w|^4-|u|^2|w|^2\big]e^{\;\!\alpha\;\!|w|^2}}{d\,(\varepsilon+e^{\;\!\alpha\;\!|w|^2/2})^4} \dif w. 
\end{align*}
We observe that the integrand above tends to negative infinity as $|u|\to\infty$. Consequently, for any $d>1$ and $\varepsilon,\alpha>0$, we can find $u\in\R^d$ with sufficiently large magnitude to make the above time derivative positive, thereby establishing part~\ref{model1}.

We conclude with a simple estimate for \eqref{JJ-model}. Substituting the identities
\begin{align*}
\nabla\pe&=\nabla\Pe-v,\\
\Omega_{jk} f&=\mm\;\!\Omega_{jk}\pe=\mm\;\!\Omega_{jk}\Pe, 
\end{align*}
into \eqref{JJ-model} gives 
\begin{align*}
\frac{1}{2}\frac{\dif}{\dif t}\,\JJ(f)
\le -\varepsilon \sum_{j<k} \int_{\R^d} \left(|\nabla\Pe|^2-2v\cdot\nabla\Pe\right)\mm^2\,|\Omega_{jk}\Pe|^2  \dif v. 
\end{align*}
The claimed result in part~\ref{model2} then follows from the Cauchy-Schwarz inequality. 
\end{proof}
 
\section{Fermi-Dirac-Fokker-Planck equation}\label{sec-FP}
This section establishes Theorem~\ref{thm-FP}, which addresses the monotonicity of the Fermi-Dirac Fisher information for \eqref{FDFP}. For clarity and reference, we first recall the relevant notation. By setting 
\begin{align*}
&\mm=\mm(f)=f(1-\varepsilon f),\\
&\Pe=\Pe(f)=\frac{\delta\mathcal{H}_\varepsilon}{\delta f}=\log\frac{f}{1-\varepsilon f}+\frac{|v|^2}{2}, 
\end{align*}
for $f:\R^d\to(0,\varepsilon^{-1})$, one readily verifies, similarly to \eqref{hpsi-heat}, that 
\begin{align}\label{hpsi}
\begin{aligned}
&\mm\Pe'=1,\\
&\nabla\mm\cdot\nabla\Pe = \mm\mm'\;\!(\nabla\Pe-v)\cdot\nabla\Pe.
\end{aligned}
\end{align}
For \eqref{FDFP} and \eqref{FDFP-E}, we define the Fermi-Dirac-Fokker-Planck operator
\begin{align}\label{LFDE}
\mathscr{L}_\varepsilon f:=\Delta f+ \nabla\cdot(v\;\!\mm)=\nabla\cdot(\mm\nabla\Pe). 
\end{align}
The Fermi-Dirac Fisher information functional $\JJ$ can be written as 
\begin{align*}
\JJ(f)=\int_{\R^d} \mm\;\!|\nabla\Pe|^2 \dif v. 
\end{align*}
In what follows, we omit the integration element $\dif v$. 

Similarly to the derivation of \eqref{DII}, a direct computation based on integration by parts and \eqref{hpsi} shows that for any $g\in C_c^\infty(\R^d)$,  
\begin{align*}
\left\langle\frac{\delta\JJ}{\delta f},\,g\right\rangle
&=\int_{\R^d} g\,\mm'\;\!|\nabla\Pe|^2 + 2\;\!\mm\nabla\Pe\cdot\nabla(g\;\!\Pe')\\
&=\int_{\R^d} g\,\mm'\;\!|\nabla\Pe|^2 - 2g\,\Delta\Pe -2g\,\Pe'\,\nabla\mm\cdot\nabla\Pe\\
&=-\int_{\R^d} g\,\mm'\;\!|\nabla\Pe|^2 + 2g\,\Delta\Pe -2g\,\mm'\;\!v\cdot\nabla\Pe. 
\end{align*}
We thus obtain the functional derivative of $\JJ$, 
\begin{align}\label{DDf}
\frac{\delta\JJ}{\delta f}=-\mm'\;\!|\nabla\Pe|^2 - 2\;\!\Delta\Pe +2\;\!\mm'\;\!v\cdot\nabla\Pe.
\end{align}

\subsection{Derivative of the Fermi-Dirac Fisher information}
We proceed to compute the evolution of $\JJ$ along solutions to \eqref{FDFP}. 
\begin{lemma}\label{lemma-FP}
For any solution $f:\R_+\times\R^d\to(0,\varepsilon^{-1})$ of \eqref{FDFP}, we have 
\begin{align}\label{FDD}
\begin{aligned}
\frac{\dif}{\dif t}\,\JJ(f)
= - \int_{\R^d} &\, 2\;\!\mm|D^2\Pe|_{\rm HS}^2 + 2\;\!\mm\mm'\;\!|\nabla\Pe|^2 -\mm''\;\!|\nabla f|^2|\nabla\Pe|^2 \\
& + 2\;\!\mm''\;\!\mm(v\cdot\nabla\Pe) \nabla f\cdot\nabla\Pe - \mm''\;\!\mm(v\cdot\nabla f) |\nabla\Pe|^2.
\end{aligned}
\end{align}
Alternatively,  this can be reformulated as
\begin{align}\label{FDDD}
\begin{aligned}
\frac{1}{2}\frac{\dif}{\dif t}\,\JJ(f)
= - \int_{\R^d} \mm|D^2\Pe|_{\rm HS}^2 + \mm(\mm'-\varepsilon\;\!\nabla f\cdot\nabla\Pe)\;\!|\nabla\Pe|^2 +2\varepsilon\;\!(\nabla f\cdot\nabla\Pe)^2 . 
\end{aligned}
\end{align}
\end{lemma}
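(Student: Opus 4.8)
The plan is to differentiate $\JJ(f)=\int_{\R^d}\mm|\nabla\Pe|^2$ along \eqref{FDFP} via $\frac{\dif}{\dif t}\JJ(f)=\int_{\R^d}\frac{\delta\JJ}{\delta f}\,\mathscr{L}_\varepsilon f$, substitute the functional derivative \eqref{DDf} and the operator \eqref{LFDE}, and reduce the resulting integrals by repeated integration by parts using \eqref{hpsi} together with the chain-rule identities $\nabla\mm=\mm'\nabla f$, $\nabla\mm'=\mm''\nabla f$, and $\nabla f=\mm(\nabla\Pe-v)$. Concretely I would split
\begin{align*}
\frac{\dif}{\dif t}\JJ(f)
&=-\int_{\R^d}\mm'|\nabla\Pe|^2\,\mathscr{L}_\varepsilon f
-2\int_{\R^d}\Delta\Pe\,\mathscr{L}_\varepsilon f\\
&\quad+2\int_{\R^d}\mm'\,(v\cdot\nabla\Pe)\,\mathscr{L}_\varepsilon f
=:I_1+I_2+I_3,
\end{align*}
using the divergence form $\mathscr{L}_\varepsilon f=\nabla\cdot(\mm\nabla\Pe)$ for $I_2$ and $I_3$, and the form $\mathscr{L}_\varepsilon f=\Delta f+\nabla\cdot(v\mm)$ for $I_1$.

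For $I_2$, one integration by parts gives $I_2=2\int\mm\,\nabla\Delta\Pe\cdot\nabla\Pe$, exactly as in the heat case. For $I_1$, on the $\Delta f$ part I substitute $\mm'\Delta f=\Delta\mm-\mm''|\nabla f|^2$ and integrate by parts twice to obtain $-\int\mm\,\Delta|\nabla\Pe|^2+\int\mm''|\nabla f|^2|\nabla\Pe|^2$, mirroring \eqref{d-heat1}; the drift part $-\int\mm'|\nabla\Pe|^2\,\nabla\cdot(v\mm)$ integrates by parts to $\int\mm\mm''(v\cdot\nabla f)|\nabla\Pe|^2+\int\mm\mm'\,v\cdot\nabla|\nabla\Pe|^2$. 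For $I_3$ I expand $\nabla\bigl(\mm'(v\cdot\nabla\Pe)\bigr)=\mm''(v\cdot\nabla\Pe)\nabla f+\mm'\nabla\Pe+\mm'\,D^2\Pe\,v$ and, using $\nabla\Pe\cdot D^2\Pe\,v=\tfrac12 v\cdot\nabla|\nabla\Pe|^2$, arrive at $I_3=-2\int\mm\mm''(v\cdot\nabla\Pe)(\nabla f\cdot\nabla\Pe)-2\int\mm\mm'|\nabla\Pe|^2-\int\mm\mm'\,v\cdot\nabla|\nabla\Pe|^2$. The key simplification is that the two contributions $\pm\int\mm\mm'\,v\cdot\nabla|\nabla\Pe|^2$ from $I_1$ and $I_3$ cancel; Bochner's identity $\tfrac12\Delta|\nabla\Pe|^2=|D^2\Pe|_{\rm HS}^2+\nabla\Delta\Pe\cdot\nabla\Pe$ on $\R^d$ then collapses the remaining second-order terms into $-2\int\mm|D^2\Pe|_{\rm HS}^2$, yielding \eqref{FDD}.

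The alternative form \eqref{FDDD} is then obtained purely algebraically from \eqref{FDD}: substitute $\mm''=-2\varepsilon$ (from \eqref{mobility}) and $\nabla f=\mm(\nabla\Pe-v)$ into the lower-order terms, expand $|\nabla f|^2$, $v\cdot\nabla f$ and $\nabla f\cdot\nabla\Pe$ in terms of $|\nabla\Pe|^2$ and $v\cdot\nabla\Pe$, and collect; all the $v$-dependent pieces recombine into $-\varepsilon\mm(\nabla f\cdot\nabla\Pe)|\nabla\Pe|^2+2\varepsilon(\nabla f\cdot\nabla\Pe)^2$.

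I expect the main obstacle to be bookkeeping rather than any single hard step: the confining drift introduces both the extra term $\nabla\cdot(v\mm)$ in $\mathscr{L}_\varepsilon f$ and the term $\mm'\,v\cdot\nabla\Pe$ in $\frac{\delta\JJ}{\delta f}$, so one generates a number of structurally similar cubic integrands in $\mm,\mm',\mm'',v,\nabla f,\nabla\Pe,D^2\Pe$, and keeping the signs straight so that $\int\mm\mm'\,v\cdot\nabla|\nabla\Pe|^2$ cancels and the output matches the stated form requires care. Throughout, boundary terms in the integrations by parts are taken to vanish (smooth, sufficiently decaying solutions), as is implicit in the statement.
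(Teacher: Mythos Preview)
Your proposal is correct and follows essentially the same route as the paper. The only cosmetic difference is that the paper groups your $I_2$ and $I_3$ together (treating $\Delta\Pe-\mm'\,v\cdot\nabla\Pe$ as a single factor against $\mathscr{L}_\varepsilon f=\nabla\cdot(\mm\nabla\Pe)$) before integrating by parts, whereas you handle them separately; the same cancellation $\pm\int\mm\mm'\,v\cdot\nabla|\nabla\Pe|^2=\pm 2\int\mm\mm'\,v\cdot(D^2\Pe\,\nabla\Pe)$ occurs either way, and the passage to \eqref{FDDD} via $\mm''=-2\varepsilon$ and $\mm v=\mm\nabla\Pe-\nabla f$ is identical.
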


\begin{proof}
In view of \eqref{DDf}, we have
\begin{align}\label{ddf}
\frac{\dif}{\dif t}\,\JJ(f)= \int_{\R^d} \frac{\delta\JJ}{\delta f}\, \partial_t f
=-\int_{\R^d} \left( \mm'\;\!|\nabla\Pe|^2  + 2\,\Delta\Pe -2\;\!\mm'\,v\cdot\nabla\Pe \right) \mathscr{L}_\varepsilon f. 
\end{align}
By definition of $\mathscr{L}_\varepsilon$ and integration by parts, we have  
\begin{align*}
\int_{\R^d}  \mm'\;\! |\nabla\Pe|^2\,\mathscr{L}_\varepsilon f
= \int_{\R^d} \mm'\;\!|\nabla\pe|^2\Delta f - \mm\;\! v\cdot\nabla\big(\mm'\;\!|\nabla\Pe|^2\big). 
\end{align*}
Applying \eqref{d-heat1} with $M=\R^d$ to the first term and expanding the second term on the right-hand side yields
\begin{align*}
\int_{\R^d}  \mm'\;\! |\nabla\Pe|^2\,\mathscr{L}_\varepsilon f
=\int_{\R^d}&\,\mm\Delta|\nabla\Pe|^2 - \mm''\;\!|\nabla f|^2|\nabla\Pe|^2 \\
&-2\;\!\mm\mm'\;\!v\cdot(D^2\Pe\nabla\Pe) - \mm\mm''\;\!(v\cdot\nabla f) |\nabla\Pe|^2 . 
\end{align*}
By the second equality of \eqref{LFDE} and integration by parts, we obtain 
\begin{align*}
\int_{\R^d} \left(\Delta\Pe -\mm'\;\!v\cdot\nabla\Pe\right)\mathscr{L}_\varepsilon f
&= \int_{\R^d}\mm\! \left[- \nabla \Delta\Pe + \nabla(\mm'\;\!v\cdot\nabla\Pe)\right]\cdot\nabla\Pe, 
\end{align*}
where the term 
\begin{align*}
\nabla(\mm'\;\!v\cdot\nabla\Pe)\cdot\nabla\Pe
= \mm'\;\!|\nabla\Pe|^2
+ \mm'\;\!v\cdot(D^2\Pe\nabla\Pe) +\mm''\;\!(v\cdot\nabla\Pe) \nabla f\cdot\nabla\Pe. 
\end{align*}
Collecting the preceding three identities together with \eqref{ddf}, we derive  
\begin{align*}
\frac{\dif}{\dif t}\,\JJ(f)
= - \int_{\R^d}&\, \mm \Delta|\nabla\Pe|^2 - 2\;\!\mm\nabla \Delta\Pe\cdot \nabla\Pe + 2\;\!\mm\mm'\;\!|\nabla\Pe|^2 - \mm''\;\!|\nabla f|^2|\nabla\Pe|^2 \\ 
& + 2\;\!\mm\mm''\;\!(v\cdot\nabla\Pe) \nabla f\cdot\nabla\Pe - \mm\mm''\;\!(v\cdot\nabla f) |\nabla\Pe|^2. 
\end{align*}
The desired result \eqref{FDD} then follows from Bochner's formula. 

The alternative expression \eqref{FDDD} is obtained by substituting into \eqref{FDD} the following elementary identities derived from the definitions, 
\begin{align*}
&\mm\;\!v\cdot\nabla\Pe = (\mm\nabla\Pe-\nabla f)\cdot\nabla\Pe
=\mm|\nabla\Pe|^2-\nabla f\cdot\nabla\Pe,\\
&\mm\;\!v\cdot\nabla f = (\mm\nabla\Pe-\nabla f)\cdot\nabla f
=\mm\nabla f\cdot\nabla\Pe-|\nabla f|^2. 
\end{align*}
Taking these identities and $\mm''=-2\varepsilon$ into account, we obtain \eqref{FDDD} as claimed. 
\end{proof}

\subsection{Profiles with increasing $\JJ$}
Based on the dissipation identity \eqref{FDDD}, we construct the Fermi-Dirac-type distribution for which $\JJ$ increases instantaneously at a given time under the evolution \eqref{FDFP}, analogously to part~\ref{model1} of Proposition~\ref{prop-model}.

\begin{proof}[Proof of part~\ref{FD-c} of Theorem~\ref{thm-FP}]
We consider the following initial distributions, with $\alpha>0$ and $u\in\R^d$ to be determined, 
\begin{align*}
f(v)&=\frac{1}{\varepsilon+e^{\;\!\alpha\;\!|v-u|^2/2}} ,\\
\mm(v)&=f(1-\varepsilon f) =\frac{e^{\;\!\alpha\;\!|v-u|^2/2}}{(\varepsilon+e^{\;\!\alpha\;\!|v-u|^2/2})^2},\\
\Pe(v)&=\log\frac{f}{1-\varepsilon f}+\frac{|v|^2}{2}=-\frac{\alpha\;\!|v-u|^2}{2}+\frac{|v|^2}{2}.
\end{align*}
By setting $w:=v-u$, we have
\begin{align*}
\nabla f &=-\alpha\;\!w\;\!\mm,\\
\nabla\Pe&=(1-\alpha)\;\!w+u,\\
D^2\Pe&=(1-\alpha)\;\!I_d. 
\end{align*}
By taking into account the cancellation of odd terms with respect to the variable $w$ in the integrand, a direct computation shows that each term in the integral on the right-hand side of \eqref{FDDD} from Lemma~\ref{lemma-FP} can be recast as 
\begin{align*}
\int_{\R^d} \mm|D^2\Pe|_{\rm HS}^2 &= d\,(1-\alpha)^2 \int_{\R^d} \mm , \\
\int_{\R^d} \mm\mm'\;\!|\nabla\Pe|^2 &= (1-\alpha)^2 \int_{\R^d} |w|^2\;\!\mm\mm' + |u|^2 \int_{\R^d} \mm\mm' ,\\
\int_{\R^d} \mm\nabla f\cdot\nabla\Pe|\nabla\Pe|^2 &= -\alpha\;\!(1-\alpha)^3 \int_{\R^d} |w|^4\;\!\mm^2 - \frac{d+2}{d}\,\alpha\;\!(1-\alpha)\,|u|^2 \int_{\R^d} |w|^2\;\!\mm^2,\\
\int_{\R^d} 2\;\!(\nabla f\cdot\nabla\Pe)^2 &= 2\;\!\alpha^2 (1-\alpha)^2 \int_{\R^d} |w|^4\;\!\mm^2 + \frac{2}{d}\,\alpha^2 |u|^2 \int_{\R^d} |w|^2\;\!\mm^2.
\end{align*}
Combining these identities with \eqref{FDDD} from Lemma~\ref{lemma-FP}, we obtain 
\begin{align}\label{JJDD}
\frac{1}{2}\frac{\dif}{\dif t}\bigg|_{t=0}\JJ(f)
= -(1-\alpha)^2\;\!\mathcal{D}_0 - |u|^2\;\!\mathcal{D}_1, 
\end{align}
where we collected 
\begin{align*}
\mathcal{D}_0&:= d \int_{\R^d} \mm + \int_{\R^d} |w|^2\;\!\mm\mm' +\varepsilon\;\!\alpha\;\!(1+\alpha) \int_{\R^d} |w|^4\;\!\mm^2 , \\
\mathcal{D}_1&:= \int_{\R^d} \mm\mm'  + \varepsilon\;\!\alpha\left(1-\alpha+\frac{2}{d}\right) \int_{\R^d} |w|^2\;\!\mm^2. 
\end{align*}
In particular, $\mathcal{D}_0$ and $\mathcal{D}_1$ are independent of $u$. Despite the availability of an explicit computation, it suffices to focus on the sign of $\mathcal{D}_1$. By a change of variables $z=\sqrt{\alpha}\;\!w\in\R^d$, we rewrite  
\begin{align*}
\mathcal{D}_1= \alpha^{-\frac{d}{2}} \int_{\R^d} \frac{e^{|z|^2/2}-\varepsilon }{(\varepsilon+e^{|z|^2/2})^3}\,e^{|z|^2/2}\dif z + \varepsilon\;\!\alpha^{-\frac{d}{2}}\bigg(1-\alpha+\frac{2}{d}\bigg) \int_{\R^d} \frac{|z|^2\;\!e^{|z|^2}}{(\varepsilon+e^{|z|^2/2})^4}\dif z. 
\end{align*}
It is straightforward to determine its asymptotic behaviour for large $\alpha$. We know that there are some constants $C_0,C_1>0$ depending only on $d$ and $\varepsilon$ such that
\begin{align*}
\mathcal{D}_1= C_0\;\!\alpha^{-\frac{d}{2}} -C_1\;\!\alpha^{1-\frac{d}{2}}. 
\end{align*}
This means that for any $\varepsilon>0$, we can choose $\alpha>0$ sufficiently large so that $\mathcal{D}_1<0$. Next, by taking $|u|$ large enough, we ensure that
\begin{align*}
-(1-\alpha)^2\;\!\mathcal{D}_0 -|u|^2\;\!\mathcal{D}_1>0. 
\end{align*}
We then conclude from \eqref{JJDD} that $\JJ(f)$ increases at $t=0$. 
\end{proof}

\subsection{Estimates for monotonicity of $\JJ$}
In order to derive the monotonicity of $\JJ$, we have to control the right-hand side of \eqref{FDD} in Lemma~\ref{lemma-FP} through appropriate estimates. 
\begin{lemma}\label{lemma-est}
For any solution $f:\R_+\times\R^d\to(0,\varepsilon^{-1})$ to \eqref{FDFP} satisfying 
\begin{align*}
F(t,v):=1 -2\varepsilon f -\frac{9\varepsilon}{4}|v|^2\;\!\mm\ge0, 
\end{align*}
then we have
\begin{align*}
\frac{1}{2}\frac{\dif}{\dif t}\,\JJ(f) \le - \int_{\R^d}F\;\!\mm|\nabla\Pe|^2 
\le -\left[\inf\nolimits_{\,\R^d}F(t,\cdot)\right]\JJ(f). 
\end{align*}
\end{lemma}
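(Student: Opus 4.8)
The plan is to start from the reformulated dissipation identity \eqref{FDDD} in Lemma~\ref{lemma-FP}, namely
\begin{align*}
\frac{1}{2}\frac{\dif}{\dif t}\,\JJ(f)
= - \int_{\R^d} \mm|D^2\Pe|_{\rm HS}^2 + \mm(\mm'-\varepsilon\;\!\nabla f\cdot\nabla\Pe)\;\!|\nabla\Pe|^2 +2\varepsilon\;\!(\nabla f\cdot\nabla\Pe)^2,
\end{align*}
and to show that the integrand is bounded below by $F\;\!\mm|\nabla\Pe|^2$ pointwise, so that the first inequality follows immediately; the second inequality is then trivial by pulling out the infimum of $F$. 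The Hilbert-Schmidt term $\mm|D^2\Pe|_{\rm HS}^2\ge0$ can simply be discarded (it only helps). The $+2\varepsilon(\nabla f\cdot\nabla\Pe)^2$ term is nonnegative as well, so the only genuinely dangerous contribution is the cross term $-\varepsilon\;\!\mm(\nabla f\cdot\nabla\Pe)|\nabla\Pe|^2$, which has no sign. The whole point is to absorb this term using a fraction of the good $2\varepsilon(\nabla f\cdot\nabla\Pe)^2$ and a fraction of the good $\mm\mm'|\nabla\Pe|^2$ (recall $\mm'=1-2\varepsilon f$, which need not be positive, so one must also be careful here — but part of $\mm'$ survives thanks to the hypothesis $F\ge0$).

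The key step is a Young/Cauchy-Schwarz splitting of the cross term. Writing $X:=\nabla f\cdot\nabla\Pe$ and $Y:=|\nabla\Pe|$, so that by Cauchy-Schwarz $|X|\le|\nabla f|\,Y$, I would estimate, for a parameter $\theta\in(0,1)$ to be chosen,
\begin{align*}
\big|\,\varepsilon\;\!\mm\;\!X\;\!Y^2\big| = \big|\,(\sqrt{2\varepsilon}\,X)\cdot\big(\tfrac{\varepsilon}{\sqrt{2}}\,\mm\,Y^2\big)\big|
\le 2\varepsilon X^2 + \frac{\varepsilon^2}{8}\,\mm^2 Y^4 \le 2\varepsilon X^2 + \frac{\varepsilon^2}{8}\,\mm\,|\nabla f|^2\,Y^2\cdot\frac{\mm\,Y^2}{|\nabla f|^2}\cdot\frac{|\nabla f|^2}{\mm\,Y^2},
\end{align*}
which is clumsy; more cleanly, I would just use $|\varepsilon\,\mm\,X\,Y^2|\le 2\varepsilon X^2 + \tfrac18\varepsilon^2\mm^2 Y^4$ and then bound $\mm^2 Y^4$. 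To turn $\mm^2 Y^4$ into something proportional to $\mm Y^2$ I need a pointwise bound on $\mm Y^2=\mm|\nabla\Pe|^2$, which is not available in general — so instead I would bound $\varepsilon^2\mm^2 Y^4$ by going back to $X$ via $|X|\le|\nabla f|Y=\mm|\nabla\pe|\,Y$ (using $\nabla f=\mm\nabla\pe$ and $|\nabla\pe|\le|\nabla\Pe|+|v|$). The honest route, which I expect the author takes, is: bound $|\varepsilon\mm X Y^2|\le 2\varepsilon X^2 + \tfrac18\varepsilon^2\mm^2Y^4$, then control $\tfrac18\varepsilon^2\mm^2Y^4$ by combining the elementary inequality $\varepsilon\mm\le\tfrac14$ (valid since $\mm=f(1-\varepsilon f)\le\tfrac1{4\varepsilon}$) with a further splitting that produces a term $\lesssim\varepsilon|v|^2\mm\cdot\mm Y^2$; tracking the constants is exactly what yields the coefficient $\tfrac94$ in front of $\varepsilon|v|^2\mm$ in the definition of $F$. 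So after absorbing, the integrand is $\ge\big(\mm'-C\varepsilon|v|^2\mm\big)\mm Y^2 = \big(1-2\varepsilon f - \tfrac94\varepsilon|v|^2\mm\big)\mm|\nabla\Pe|^2 = F\;\!\mm|\nabla\Pe|^2$, as desired.

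The main obstacle is bookkeeping: getting the cross term absorbed with precisely the right share of $2\varepsilon X^2$ and the right share of $\mm\mm'|\nabla\Pe|^2$, while keeping enough of $\mm'=1-2\varepsilon f$ intact to reconstruct the $1-2\varepsilon f$ appearing in $F$. Concretely, I would split $2\varepsilon X^2 = \lambda\cdot 2\varepsilon X^2 + (1-\lambda)\cdot 2\varepsilon X^2$ and use only the first piece to kill the cross term via Young with a weight that leaves a residual of the form $-c\,\varepsilon\,|\nabla f|^2\,|\nabla\Pe|^2$; then bound $|\nabla f|^2 = \mm^2|\nabla\pe|^2 \le 2\mm^2(|\nabla\Pe|^2+|v|^2)$ and use $\varepsilon\mm\le\tfrac14$ on the $|\nabla\Pe|^2$ part and the $|v|^2$ part separately, feeding the $|\nabla\Pe|^2$ part back into the $\mm\mm'|\nabla\Pe|^2$ budget and the $|v|^2$ part into the $\varepsilon|v|^2\mm\cdot\mm|\nabla\Pe|^2$ term. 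Once the arithmetic is arranged so that the surviving coefficient of $\mm|\nabla\Pe|^2$ is exactly $1-2\varepsilon f-\tfrac94\varepsilon|v|^2\mm$, the two displayed inequalities follow at once, the second simply by $\int F\mm|\nabla\Pe|^2\ge(\inf_{\R^d}F(t,\cdot))\int\mm|\nabla\Pe|^2=(\inf_{\R^d}F(t,\cdot))\,\JJ(f)$.
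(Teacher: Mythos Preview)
Your approach has a genuine gap, and it stems from starting at the wrong identity. The paper works from \eqref{FDD}, not \eqref{FDDD}. In \eqref{FDD} the two sign-indefinite terms carry explicit factors of $v$, namely $2\mm''\mm(v\cdot\nabla\Pe)\,\nabla f\cdot\nabla\Pe$ and $-\mm''\mm(v\cdot\nabla f)\,|\nabla\Pe|^2$; a single Cauchy--Schwarz then Young step bounds their combined magnitude by $3\mm|v|\,|\nabla f|\,|\nabla\Pe|^2\le(\tfrac94\mm^2|v|^2+|\nabla f|^2)|\nabla\Pe|^2$. The $|\nabla f|^2|\nabla\Pe|^2$ piece is exactly cancelled by the good term $-\mm''|\nabla f|^2|\nabla\Pe|^2$ in \eqref{FDD}, and what remains is $(2\mm'-\tfrac{9}{2}\varepsilon|v|^2\mm)\,\mm|\nabla\Pe|^2=2F\,\mm|\nabla\Pe|^2$. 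That is the entire proof; the $\tfrac94$ is just the square of $\tfrac32$ from the Young split of $3\mm|v|\cdot|\nabla f|$.

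Your route from \eqref{FDDD} cannot produce the factor $|v|^2$ by the manipulations you describe. After the Young step $|\varepsilon\mm X Y^2|\le 2\varepsilon X^2+\tfrac18\varepsilon\,\mm^2 Y^4$ (your $\varepsilon^2$ is a typo) you have spent all of $2\varepsilon X^2$ and are left with a residual $\propto \mm^2|\nabla\Pe|^4$. Bounding this by $\tfrac94\varepsilon|v|^2\mm^2|\nabla\Pe|^2$ would require $|\nabla\Pe|^2\le 18|v|^2$ pointwise, which is false. Your fallback does not rescue this: once you ``leave a residual $-c\varepsilon|\nabla f|^2|\nabla\Pe|^2$'' and insert $|\nabla f|^2\le 2\mm^2(|\nabla\Pe|^2+|v|^2)$, the $|\nabla\Pe|^2$ part regenerates exactly the same uncontrollable $\mm^2|\nabla\Pe|^4$; and ``feeding it back into the $\mm\mm'|\nabla\Pe|^2$ budget'' would need $\varepsilon\mm|\nabla\Pe|^2$ bounded, which it is not. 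The point is that in \eqref{FDDD} the variable $v$ has been eliminated via $\mm v=\mm\nabla\Pe-\nabla f$, so the only way to make $|v|^2$ reappear is to substitute this relation back in---which is precisely undoing the passage from \eqref{FDD} to \eqref{FDDD}. The clean argument lives in \eqref{FDD}.
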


\begin{proof}
By the Cauchy-Schwarz inequality, we have 
\begin{align*}
2\left|\mm\left(v\cdot\nabla\Pe\right) \nabla f\cdot\nabla\Pe \right| + \left| \mm\left(v\cdot\nabla f\right)\right| |\nabla\Pe|^2
\le  \frac{9}{4}\,\mm^2|v|^2|\nabla\Pe|^2 + |\nabla f|^2 |\nabla\Pe|^2. 
\end{align*}
Combining this with \eqref{FDD} of Lemma~\ref{lemma-FP} implies that 
\begin{align*}
\frac{\dif}{\dif t}\,\JJ(f)
\le - \int_{\R^d}\left(2\;\!\mm' +\frac{9}{4}|v|^2\;\!\mm\mm'' \right) \mm|\nabla\Pe|^2. 
\end{align*}
Given that $\mm'=1-2\varepsilon f$ and $\mm''=-2\varepsilon$, the claim is established. 
\end{proof}

We are now in a position to complete the proof of Theorem~\ref{thm-FP}. It is not our intention to carry out a precise computation of the explicit bounds between the parameters $\varepsilon$ and $\beta$. In particular, the factor $1-\frac{4\varepsilon}{\beta}$ appearing in the statement of Theorem~\ref{thm-FP} is not optimal. 

\begin{proof}[Proof of part~\ref{FD-d} of Theorem~\ref{thm-FP}]
Since the initial data satisfies $0\le f(0,\cdot)\le\mbe$ for the Fermi-Dirac equilibrium $\mbe$, the comparison principle (see for instance \cite[Lemma~2.7]{CLR}) ensures this bound is preserved for the solution $f=f(t,\cdot)$ of \eqref{FDFP} for all $t\in\R_+$. It follows that, whenever $2\varepsilon\mbe\le1$, a condition that holds in particular if $2\varepsilon\le\beta$, we have 
\begin{align*}
1-2\varepsilon f-\frac{9\;\!\varepsilon}{4}|v|^2f\;\!(1-\varepsilon f)
\ge 1-2\varepsilon\mbe-\frac{9\;\!\varepsilon}{4}|v|^2\mbe(1-\varepsilon\mbe). 
\end{align*}
We simply observe from the definition \eqref{mbe} of $\mbe$ that 
\begin{align*}
2\mbe+\frac{9}{4}|v|^2\mbe(1-\varepsilon\mbe)
&\le\frac{2}{\beta}+\frac{9}{4\beta}\max\nolimits_{\;\!\R^d}|v|^2e^{-|v|^2/2}\\
&=\frac{2}{\beta}+\frac{9}{2e\beta} <\frac{4}{\beta}. 
\end{align*}
Therefore, 
\begin{align*}
\inf\nolimits_{\,\R^d} \left(1 -2\varepsilon f -\frac{9\varepsilon}{4}|v|^2\;\!\mm\right)\ge1-\frac{4\varepsilon}{\beta}, 
\end{align*}
In the light of Lemma~\ref{lemma-est}, whenever $1-\frac{4\varepsilon}{\beta}\ge0$, we derive the monotonicity of $\JJ(f)$. 
\end{proof}

\section{Linear-type Landau-Fermi-Dirac equation}\label{sec-LFP}
We examine in this section the estimates quantifying the dissipation of the Fisher information functional $\JJ$ for a linear-type Landau-Fermi-Dirac equation, with the main result stated below in Theorem~\ref{prop-landau}. The study parallels the work \cite{Villani-landau-fisher} on the (non-quantum) Landau equation with Maxwell molecules. We also note the broader relevance of the functional $\JJ$, as a weighted version has already played a role in deriving entropy dissipation estimates for the Landau-Fermi-Dirac equation in \cite{ABDL}. 

The investigation of the corresponding problem for the full Landau-Fermi-Dirac equation, which serves as the quantum analogue of the classical Landau equation addressed in \cite{Guillen-Silvestre}, will be the subject of a forthcoming study. 

The homogeneous Fermi-Dirac-Landau equation reads
\begin{align*}
\partial_t f(t,v)=Q_\varepsilon(f,f)(t,v) {\quad\rm for\quad} (t,v) \in\R_+\times\R^d. 
\end{align*}
For the case of Maxwell molecules, the collision operator $Q$, depending on the quantum parameter $\varepsilon>0$, is defined as 
\begin{align*}
Q_\varepsilon(g,h):=\nabla\cdot\int_{\R^d} A(v-w)\left[g(w)(1-\varepsilon g(w))\nabla h(v)-h(v)(1-\varepsilon h(v))\nabla g(w)\right] \dif w, 
\end{align*}
where the $d\times d$ matrix, proportional to the projection onto $(\R z)^\perp$, is given by 
\begin{align*}
A(z):=|z|^2-z\otimes z.
\end{align*}
Although the operator $Q_\varepsilon(\mbe,f)$ is not linear in $f$, it arises from a ``linearization" around the Fermi-Dirac equilibrium $\mbe$. Of interest is the resulting linear-type equation, 
\begin{align}\label{LLFD}
\partial_t f=Q_\varepsilon(\mbe,f).
\end{align}
Written out more explicitly, it is  
\begin{align*}
\partial_t f=\nabla\cdot\int_{\R^d} A(v-w)\left[\mbew(w)\nabla f(v)-\mm(v)\nabla\mbe(w)\right] \dif w, 
\end{align*}
where we use the shorthand  
\begin{align*}
\mbew:=\mbe\left(1-\varepsilon\mbe\right)=\frac{\beta\;\!e^{\;\!|v|^2/2}}{(\varepsilon+\beta\;\!e^{\;\!|v|^2/2})^2}. 
\end{align*}
In particular, the operator $Q_\varepsilon(\mbe,f)$ can be expressed in a diffusion-drift form, 
\begin{align}\label{QQ}
\begin{aligned}
Q_\varepsilon(\mbe,f)=A*\mbew:D^2 f &+ (\nabla A*\mbew)\cdot \nabla f \\
& - (\nabla A*\mbe)\cdot \nabla\mm-(D^2:A*\mbe)\,\mm. 
\end{aligned}
\end{align}

\subsection{Reformulation of the operator}
Let us compute the coefficients appearing in the above diffusion-drift form \eqref{QQ}, with the aim of verifying that  $Q_\varepsilon(\mbe,\cdot)$ splits into a spherical diffusion part and the Fermi-Dirac-Fokker-Planck part, both of which have been analysed in the preceding two sections. A closely related formulation for the classical Landau operator can be found in \cite{Villani-landau}. 

\begin{lemma}\label{lemma-LFP}
The coefficients of the linear-type Fermi-Dirac-Landau operator $Q_\varepsilon(\mbe,\cdot)$ in the expression \eqref{QQ} are given by
\begin{align}
&A*\mbew(v) = \nbew\;\!A(v) + \left(d-1\right)\nbe\,I_d, \label{coe1} \\
&\nabla A*\mbew =-\left(d-1\right)v\,\nbew, \label{coe2} \\
&\nabla A*\mbe  = -\left(d-1\right) v\,\nbe, \label{coe3} \\
&D^2:A*\mbe = -d\left(d-1\right)\nbe. \label{coe4}
\end{align}
The constants $\nbe$ and $\nbew$ appearing above denote the mass of $\mbe$ and $\mbew$, respectively, 
\begin{align*}
&\nbe:=\int_{\R^d}\mbe(v)\dif v \\
&\nbew:=\int_{\R^d}\mbew(v)\dif v. 
\end{align*}
Furthermore, the operator $Q_\varepsilon(\mbe,\cdot)$ can be recast in the form 
\begin{align}\label{QQQ}
Q_\varepsilon(\mbe,f) =  \nbew\;\!\sum\nolimits_{j<k} \Omega_{jk}^2\;\!f + \left(d-1\right)\nbe\,\mathscr{L}_\varepsilon f,  
\end{align}
where $\Omega_{jk}$, for $i,j=1,\ldots,d$, denote the generators of rotations as given in \eqref{rotation}, and $\mathscr{L}_\varepsilon$ refers to the Fermi-Dirac-Fokker-Planck operator defined in \eqref{LFDE}. 
\end{lemma}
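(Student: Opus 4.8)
The plan is to compute each of the four convolutions \eqref{coe1}--\eqref{coe4} directly, exploiting the radial symmetry of $\mbe$ and $\mbew$, and then assemble the diffusion-drift form \eqref{QQ} into \eqref{QQQ}. First I would treat \eqref{coe1}. Writing $A(z)=|z|^2 I_d-z\otimes z$ and substituting $z=v-w$, the convolution $A*\mbew(v)=\int A(v-w)\mbew(w)\dif w$ expands into terms involving $\int\mbew$, $\int w\,\mbew$, and $\int w\otimes w\,\mbew$. The linear-in-$w$ integral vanishes by oddness, and the second moment matrix is isotropic: $\int w_iw_j\,\mbew(w)\dif w=\bigl(\tfrac1d\int|w|^2\mbew\bigr)\delta_{ij}$. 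The key computational input is the identity $\int|w|^2\mbew(w)\dif w=d\,\nbe$, which follows from the algebraic relation $|w|^2\mbew=|w|^2\mbe(1-\varepsilon\mbe)$ together with integration by parts using $\nabla\mbe=-v\,\mbew$ (so that $\nabla\cdot(v\,\mbe)=d\,\mbe-|v|^2\mbew$, and the left side integrates to zero). Feeding $\int|w|^2\mbew=d\,\nbe$ and $\int\mbew=\nbew$ into the expansion of $A*\mbew(v)$ produces exactly $\nbew A(v)+(d-1)\nbe I_d$ after collecting the $|v|^2 I_d$, $v\otimes v$, and constant pieces.

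Next I would handle \eqref{coe2}--\eqref{coe4}, which are lower-order and follow the same template. Since $\nabla\cdot A(z)=\partial_{z_i}(|z|^2\delta_{ij}-z_iz_j)=2z_j-\delta_{ii}z_j-z_j=-(d-1)z_j$, i.e.\ $\nabla\cdot A(z)=-(d-1)z$, convolving against $\mbew$ and using $\int w\,\mbew=0$ gives \eqref{coe2} immediately; the identical computation with $\mbe$ in place of $\mbew$ gives \eqref{coe3}. For \eqref{coe4} one computes $D^2:A(z)=\partial_{z_i}\partial_{z_j}(|z|^2\delta_{ij}-z_iz_j)$; a short index calculation yields the constant $-d(d-1)$, so $D^2:A*\mbe=-d(d-1)\nbe$. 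None of these require moment identities beyond total mass.

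With the four coefficients in hand, I would substitute them into \eqref{QQ}. The diffusion term becomes $\bigl(\nbew A(v)+(d-1)\nbe I_d\bigr):D^2f=\nbew\,(A(v):D^2f)+(d-1)\nbe\,\Delta f$. The remaining three terms give $-(d-1)v\,\nbew\cdot\nabla f$ from the first-order diffusion coefficient, and $+(d-1)\nbe\,v\cdot\nabla\mm+d(d-1)\nbe\,\mm$ from the drift pieces. Grouping the $\nbew$ terms: $\nbew\bigl(A(v):D^2f-(d-1)v\cdot\nabla f\bigr)$, which I recognize as $\nbew\sum_{j<k}\Omega_{jk}^2 f$ — this is the standard identity that $A(v):D^2+\nabla\cdot A(v)\cdot\nabla$ restricted to the sphere is the spherical Laplacian, and I would verify it by the direct expansion $\sum_{j<k}\Omega_{jk}^2=\sum_{j<k}(v_j\partial_k-v_k\partial_j)^2$, which expands to $|v|^2\Delta-\sum_{j,k}v_jv_k\partial_j\partial_k-(d-1)v\cdot\nabla=A(v):D^2-(d-1)v\cdot\nabla$. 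Grouping the $\nbe$ terms: $(d-1)\nbe\bigl(\Delta f+v\cdot\nabla\mm+d\,\mm\bigr)=(d-1)\nbe\bigl(\Delta f+\nabla\cdot(v\,\mm)\bigr)=(d-1)\nbe\,\mathscr{L}_\varepsilon f$ by \eqref{LFDE}. This yields \eqref{QQQ}.

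The main obstacle is the second-moment identity $\int|w|^2\mbew(w)\dif w=d\,\nbe$ underlying \eqref{coe1}: everything else is bookkeeping with isotropy and oddness, but this one equality is what makes the $A(v)$-coefficient collapse to the clean form $\nbew A(v)+(d-1)\nbe I_d$ rather than a generic combination of $|v|^2$, $v\otimes v$, and $I_d$ with three independent moment constants. I would prove it carefully via the integration-by-parts argument above (integrating $\nabla\cdot(v\,\mbe)$, using $\nabla\mbe=-v\,\mbew$), being attentive to the decay of $\mbe$ at infinity so the boundary term genuinely vanishes. A secondary, purely organizational point is keeping the index contractions in $A:D^2$ and $D^2:A$ consistent so the factors of $(d-1)$ and $d(d-1)$ come out with the right signs.
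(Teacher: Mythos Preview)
Your proposal is correct and follows essentially the same route as the paper: expand $A(v-w)$ using oddness and isotropy, establish the second-moment identity $\int|w|^2\mbew=d\,\nbe$ via integration by parts against $\nabla\mbe=-v\,\mbew$, compute $\nabla\cdot A=-(d-1)z$ and $D^2{:}A=-d(d-1)$ directly, and then assemble \eqref{QQ} into the spherical-diffusion plus Fermi--Dirac--Fokker--Planck splitting by verifying $\sum_{j<k}\Omega_{jk}^2=A(v){:}D^2-(d-1)v\cdot\nabla$. The only difference is cosmetic: the paper packages the expansion of $A*\mbew$ as $\nbew A(v)+\int A(w)\mbew(w)\dif w$ before invoking isotropy, whereas you expand componentwise first, but the content is identical.
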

 
\begin{proof}
To compute the leading-order coefficients, we expand $A(v-w)$ and observe that certain terms vanish due to oddness under integration. Specifically, we have 
\begin{align*}
A*\mbew(v)&= \int_{\R^d}\left[|v-w|^2-(v-w)\otimes (v-w)\right]\mbew (w) \dif w\\
&= \nbew\;\!A(v) +\int_{\R^d}A(w)\;\!\mbew(w) \dif w\\
&= \nbew\;\!A(v) +\frac{d-1}{d}\,I_d\int_{\R^d}|w|^2\;\!\mbew(w) \dif w.  
\end{align*}
By noticing $\nabla\mbe(v)=-v\;\!\mbew(v)$ and applying integration by parts, we have 
\begin{align*}
\frac{d-1}{d}\,I_d\int_{\R^d}|w|^2\;\!\mbew(w) \dif w
=\frac{d-1}{d}\,I_d\int_{\R^d}-w\cdot\nabla\mbe(w) \dif w
=\left(d-1\right)\nbe\,I_d. 
\end{align*}
Combining these two computations yields \eqref{coe1}. 

For the first-order coefficients, we use the identity $\nabla A(v)=(1-d)\;\!v$, which gives
\begin{align*}
\nabla A*\mbew = -(d-1) \int_{\R^d}\left(v-w\right)\mbew(w) \dif w =-\left(d-1\right)v\,\nbew, 
\end{align*}
This confirms \eqref{coe2}, and \eqref{coe3} is derived similarly.

The zeroth-order coefficient is found by using $D^2:A(v)=-d\;\!(d-1)$, which gives \eqref{coe4}. 

Substituting \eqref{coe1}, \eqref{coe2}, \eqref{coe3}, \eqref{coe4} into \eqref{QQ}, we obtain 
\begin{align*}
Q_\varepsilon(\mbe,f)= \nbew \left[A(v):D^2 f -\left(d-1\right)v\cdot\nabla f\right] + \left(d-1\right)\nbe\left(\Delta f+v\cdot\nabla\mm +d\;\!\mm\right). 
\end{align*}
On the right-hand side, the second term is proportional to the Fermi-Dirac-Fokker-Planck operator $\mathscr{L}_\varepsilon$. The first term corresponds to spherical diffusion in $\R^d$, which, when restricted to the unit sphere $\mathbb{S}^{d-1}$, is equivalent to the spherical Laplacian $\Delta_{\mathbb{S}^{d-1}}$. Indeed, in terms of the rotational vector fields $\Omega_{jk}=v_j\partial_{v_k}-v_k\partial_{v_j}$, one readily verifies that
\begin{align*}
\sum\nolimits_{j<k} \Omega_{jk}^2=A(v):D^2 -\left(d-1\right)v\cdot\nabla. 
\end{align*}
Therefore, we obtain the desired expression \eqref{QQQ}.
\end{proof}

\subsection{Derivative along the linear-type Landau-Fermi-Dirac flow}
Drawing on the results established in the previous two sections, we directly deduce the monotonicity properties of the Fermi-Dirac Fisher information along solutions of \eqref{LLFD}. 

\begin{theorem}\label{prop-landau}
Let $d\ge2$, and let $f:\R_+\times\R^d\to(0,\varepsilon^{-1})$ be a solution to \eqref{LLFD}. 
\begin{enumerate}[label=(\roman*), leftmargin=*, labelsep=0.5em]
\item\label{landau1} For any $\varepsilon>0$, there exists $\alpha\ge1$ and $u\in\R^d$ such that 
\begin{align*}
\frac{\dif}{\dif t}\bigg|_{t=0}\JJ(f)>0
{\quad\rm for\quad}
\left.f\right|_{t=0}\!(v)=\frac{1}{\varepsilon+e^{\;\!\alpha\;\!|v-u|^2/2}}. 
\end{align*}
\item\label{landau2} If $0\le\left.f\right|_{t=0}\le\mbe$ in $\R^d$ with the constants $\varepsilon,\beta$ satisfying $0\le6\varepsilon\le\beta$, then we have
\begin{align*}
\frac{1}{2}\frac{\dif}{\dif t}\,\JJ(f)
\le -\left(d-1\right)\nbe\left(1-\frac{6\varepsilon}{\beta}\right)\JJ(f). 
\end{align*}
\end{enumerate}
\end{theorem}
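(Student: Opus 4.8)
The plan is to exploit the operator splitting \eqref{QQQ} of Lemma~\ref{lemma-LFP}, which writes $Q_\varepsilon(\mbe,\cdot)$ as the combination $\nbew\sum_{j<k}\Omega_{jk}^2+(d-1)\nbe\,\mathscr{L}_\varepsilon$ with the \emph{strictly positive} constant coefficients $\nbew$ and $(d-1)\nbe$. Feeding this into the functional derivative \eqref{DDf}, for any solution $f$ of \eqref{LLFD} one obtains
\begin{align*}
\frac{\dif}{\dif t}\,\JJ(f)=\nbew\int_{\R^d}\frac{\delta\JJ}{\delta f}\sum\nolimits_{j<k}\Omega_{jk}^2 f+(d-1)\nbe\int_{\R^d}\frac{\delta\JJ}{\delta f}\,\mathscr{L}_\varepsilon f,
\end{align*}
where the first integral is exactly the Fisher dissipation along the model equation \eqref{model} computed in Lemma~\ref{lemma-model}, and the second is the one along \eqref{FDFP} computed in Lemma~\ref{lemma-FP}. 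The key observation is that those computations, and the ensuing estimates of Proposition~\ref{prop-model} and Lemma~\ref{lemma-est}, are identities and inequalities for the quantities $\int_{\R^d}\frac{\delta\JJ}{\delta f}\sum_{j<k}\Omega_{jk}^2 f$ and $\int_{\R^d}\frac{\delta\JJ}{\delta f}\mathscr{L}_\varepsilon f$ valid for \emph{every} admissible $f$, so they can be inserted into the display above without change; both parts of the theorem then follow.

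For part~\ref{landau1}, I would substitute the Fermi-Dirac-type profile $f(v)=(\varepsilon+e^{\alpha|v-u|^2/2})^{-1}$ and reuse the two computations already performed for exactly this profile. From the proof of part~\ref{model1} of Proposition~\ref{prop-model}, the first integral is, after the change of variables $w=v-u$, a polynomial in $|u|$ of degree four whose top-order coefficient is a strictly positive $u$-independent constant (proportional to $\varepsilon\int_{\R^d}|w|^2\mm^2\dif w$); from the proof of part~\ref{FD-c} of Theorem~\ref{thm-FP}, see \eqref{JJDD}, the second integral is only quadratic in $|u|$. Hence, for any fixed $\alpha\ge1$ (one may simply take $\alpha=1$), the right-hand side tends to $+\infty$ as $|u|\to\infty$, and it suffices to choose $|u|$ large; the mechanism is that the rotational part alone produces a positive term that outgrows any negative Fokker-Planck contribution, so no largeness of $\alpha$ is needed here.

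For part~\ref{landau2}, I would first record that $\mbe$ is a stationary solution of \eqref{LLFD}: being radial it is annihilated by each $\Omega_{jk}$, while $\mathscr{L}_\varepsilon\mbe=0$ by \eqref{LFDE}. Since \eqref{QQ} together with \eqref{coe1} presents \eqref{LLFD} as a uniformly elliptic drift-diffusion equation, the comparison principle propagates the bound $0\le f(0,\cdot)\le\mbe$ to all times. Under $6\varepsilon\le\beta$ one has $2\varepsilon\mbe\le1$, hence $\mm=f(1-\varepsilon f)\le\mbe(1-\varepsilon\mbe)=\mbew$ by monotonicity of $s\mapsto s(1-\varepsilon s)$, and the same elementary estimate as in the proof of part~\ref{FD-d} of Theorem~\ref{thm-FP} gives $F:=1-2\varepsilon f-\frac{9\varepsilon}{4}|v|^2\mm\ge1-\frac{4\varepsilon}{\beta}\ge0$. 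Rather than combining the \emph{final} scalar bounds of part~\ref{model2} of Proposition~\ref{prop-model} and of Lemma~\ref{lemma-est}, I would combine their \emph{intermediate} forms $\varepsilon\int_{\R^d}|v|^4\mm^2|\nabla\Pe|^2$ and $-\int_{\R^d}F\,\mm|\nabla\Pe|^2$, which yields
\begin{align*}
\frac{1}{2}\frac{\dif}{\dif t}\,\JJ(f)\le\int_{\R^d}\big[\,\nbew\,\varepsilon\,|v|^4\mm-(d-1)\nbe\,F\,\big]\,\mm\,|\nabla\Pe|^2.
\end{align*}
It then remains to bound the bracket pointwise by $-(d-1)\nbe(1-\frac{6\varepsilon}{\beta})$; using $\mm\le\mbew$, $\max\{\mbe,\mbew\}\le\beta^{-1}e^{-|v|^2/2}$ and $\nbew\le\nbe$, this reduces to the one-variable inequality $[\,r^4+\frac94(d-1)r^2+2(d-1)\,]\,e^{-r^2/2}\le6(d-1)$ for $r\ge0$, which holds for every $d\ge2$ since the left-hand side is at most $16e^{-2}+(d-1)(\frac{9}{2e}+2)$ in view of $\max_{r\ge0}r^4e^{-r^2/2}=16e^{-2}$ and $\max_{r\ge0}r^2e^{-r^2/2}=2e^{-1}$, while $16e^{-2}\le(d-1)(4-\frac{9}{2e})$ for $d\ge2$.

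I expect the genuine obstacle to be this last constant-chasing. Combining the two flows already at the level of the \emph{final} estimates --- $\varepsilon\sup_{\R^d}(|v|^4\mm)\,\JJ(f)$ from part~\ref{model2} of Proposition~\ref{prop-model} and $-(1-\frac{4\varepsilon}{\beta})\JJ(f)$ from Lemma~\ref{lemma-est} --- is too lossy and closes only for $d\ge3$; the pointwise combination above, which exploits that $\mbe$ has Gaussian tails so that the weights $|v|^4\mm$ and $|v|^2\mm$ are small precisely where $\mm$ itself need not be, is what brings the dimension down to $d\ge2$ and pins the admissible range (the factor $6\varepsilon/\beta$, which is itself not optimal). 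The only remaining point needing justification is the comparison principle for \eqref{LLFD}, which is routine once the diffusion matrix $A*\mbew=\nbew\,A(v)+(d-1)\nbe\,I_d$ is recognized as uniformly positive definite.
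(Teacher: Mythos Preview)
Your proposal is correct and follows essentially the same route as the paper: you split $Q_\varepsilon(\mbe,\cdot)$ via Lemma~\ref{lemma-LFP}, then feed in the identities of Lemma~\ref{lemma-model} and Lemma~\ref{lemma-FP} and, for part~\ref{landau2}, combine the \emph{intermediate} pointwise bounds from Proposition~\ref{prop-model}\ref{model2} and Lemma~\ref{lemma-est} exactly as the paper does, arriving at the same quantity $c_0=\inf_{\R^d}\big(1-2\varepsilon f-\frac{9\varepsilon}{4}|v|^2\mm-\frac{\varepsilon\nbew}{(d-1)\nbe}|v|^4\mm\big)$ and the same numerics $2+\frac{9}{2e}+\frac{16}{e^2(d-1)}<6$.

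The one genuine (minor) difference is in part~\ref{landau1}: the paper simply cites Proposition~\ref{prop-model}\ref{model1} and Theorem~\ref{thm-FP}\ref{FD-c} together, implicitly choosing $\alpha$ large so that \emph{both} contributions become positive for large $|u|$; you instead observe that the spherical contribution is quartic in $|u|$ while the Fokker-Planck contribution is only quadratic (from \eqref{JJDD}), so the sum is positive for large $|u|$ regardless of the sign of $\mathcal{D}_1$, and $\alpha=1$ already suffices. This is a small but clean simplification over the paper's argument.
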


\begin{proof}
In the light of Lemma~\ref{lemma-LFP}, we decompose the time derivative of $\JJ$ along \eqref{LLFD} into the contributions arising from the spherical diffusion \eqref{model} and the Fermi-Dirac-Fokker-Planck dynamics \eqref{LFDE}, 
\begin{align*}
\frac{\dif}{\dif t}\,\JJ(f)
=\int_{\R^d} \frac{\delta\JJ}{\delta f}\, \partial_t f
=\nbew\int_{\R^d} \frac{\delta\JJ}{\delta f}\sum_{j<k}\Omega_{jk}^2\;\!f + \left(d-1\right)\nbe \int_{\R^d} \frac{\delta\JJ}{\delta f}\,\mathscr{L}_\varepsilon f.  
\end{align*}
Then part~\ref{landau1} of the statement follows immediately from part~\ref{model1} of Proposition~\ref{prop-model} and part~\ref{FD-c} of Theorem~\ref{thm-FP}. 

To establish part~\ref{landau2}, we invoke part~\ref{model2} of Proposition~\ref{prop-model} and Lemma~\ref{lemma-est}, from which it follows that
\begin{align}\label{ccc0}
\frac{1}{2}\frac{\dif}{\dif t}\,\JJ(f)
\le - \left(d-1\right)\nbe\,c_0\, \JJ(f), 
\end{align}
provided that
\begin{align*}
c_0:=\inf\nolimits_{\,\R^d} \left(1 -2\varepsilon f -\frac{9\varepsilon}{4}\;\!|v|^2\;\!\mm - \frac{\varepsilon\;\!\nbew}{\left(d-1\right)\nbe}\;\!|v|^4\;\!\mm\right)\ge0. 
\end{align*}
By the comparison principle, the solution of \eqref{LLFD} satisfies $0\le f(t,\cdot)\le\mbe$ for all $t\in\R_+$. Consequently, whenever $2\varepsilon\mbe\le1$, which is in particular guaranteed by $2\varepsilon\le\beta$, we have
\begin{align*}
c_0\ge 1-\frac{2\varepsilon}{\beta} - \frac{9\varepsilon}{4\beta}\max\nolimits_{\;\!\R^d}|v|^2e^{-|v|^2/2} -\frac{\varepsilon\;\!\nbew}{\left(d-1\right)\beta\;\!\nbe} \max\nolimits_{\,\R^d}|v|^4e^{-|v|^2/2}. 
\end{align*}
A direct computation of the maxima, as well as the fact that $\nbew \le \nbe$, implies
\begin{align*}
c_0\ge 1-\frac{2\varepsilon}{\beta} -\frac{9\varepsilon}{2e\beta}  -\frac{16}{e^2}\;\!\frac{\varepsilon}{\left(d-1\right)\beta}
> 1 - \frac{6\varepsilon}{\beta} . 
\end{align*}
Together with \eqref{ccc0}, this leads to the desired estimate. 
\end{proof}

\end{document}